\title[Reflection principle for lightlike line segments]{%
Reflection principle for lightlike line segments on maximal surfaces
}
\author[S.~Akamine and H.~Fujino]{
Shintaro Akamine and Hiroki Fujino
}   
\address[Shintaro Akamine]{%
Graduate School of Mathematics, 
Nagoya University, Chikusa-ku, Nagoya 464-8602, Japan
}
\email{s-akamine@math.nagoya-u.ac.jp}
\address[Hiroki Fujino]{%
  Institute for Advanced Research, Graduate School of Mathematics, 
Nagoya University, Chikusa-ku, Nagoya 464-8602, Japan
}
\email{m12040w@math.nagoya-u.ac.jp}
\subjclass[2010]{%
 Primary  53A10;   
 Secondary 53B30; 31A05; 31A20}
\date{\today}
\keywords{%
    reflection principle,
    maximal surface, 
    lightlike boundary problem,
    harmonic mapping}%
\thanks{
The first author was partially supported by 
JSPS KAKENHI Grant Number 19K14527, 17H06466 and JSPS/FWF Bilateral Joint Project I3809-N32 ``Geometric Shape Generation'',
and the second author by JSPS KAKENHI Grant Number 19K21022.
}
\theoremstyle{plain}
 \newtheorem{introtheorem}{Theorem}
 \newtheorem{theorem}{Theorem}[section]
 \newtheorem{proposition}[theorem]{Proposition}
 \newtheorem{fact}[theorem]{Fact}
 \newtheorem{lemma}[theorem]{Lemma}
 \newtheorem{question}{Question}
\theoremstyle{definition}
 \newtheorem{definition}[theorem]{Definition}
\theoremstyle{remark}
 \newtheorem{remark}[theorem]{Remark}
 \newtheorem*{remark*}{Remark}
\newtheorem{example}[theorem]{Example}
 \newtheorem*{acknowledgement}{Acknowledgement}
\numberwithin{equation}{section}
\renewcommand{\phi}{\varphi}
\newcommand{\Arg}{{\rm Arg}}
\definecolor{Blue}{rgb}{0,0,1}  
\definecolor{Red}{rgb}{1,0,0}  
\begin{document}
\maketitle

\begin{abstract}
As in the case of minimal surfaces in the Euclidean 3-space, the reflection principle for maximal surfaces in the Lorentz-Minkowski 3-space asserts that if a maximal surface has a spacelike line segment $L$, the surface is invariant under the $180^\circ$-rotation with respect to $L$. However, such a reflection property does not hold for lightlike line segments on the boundaries of maximal surfaces in general. 

In this paper, we show some kind of reflection principle for lightlike line segments on the boundaries of maximal surfaces when lightlike line segments are connecting shrinking singularities. As an application, we construct various examples of periodic maximal surfaces with lightlike lines from tessellations of $\mathbb{R}^2$.\end{abstract}


\section{Introduction} \label{sec:1} 
The classical Schwarz reflection principle for harmonic functions yields a symmetry  principle for minimal surfaces in the 3-dimensional Euclidean space $\mathbb{E}^3$: if a minimal surface in $\mathbb{E}^3$ has a straight line segment $L$ on its boundary, then the surface can be extended across $L$ and the extended surface is invariant under the $180^\circ$-rotation  with respect to $L$ (see \cite[p.~289]{DHS}, \cite[p.~140]{N} and \cite[p.~54]{O} for example). 
This principle is directly derived from the Schwarz reflection principle and the fact that each coordinate function of a conformal minimal immersion in $\mathbb{E}^3$ is harmonic. For the same reason, such a reflection principle for lines also holds for maximal surfaces, i.e. spacelike surfaces with vanishing mean curvature, in the 3-dimensional Lorentz-Minkowski space $\mathbb{L}^3$, when the straight line segment is spacelike, see Al\'{i}as-Chaves-Mira \cite[Theorem 3.10]{ACM}. 
As a singular version of this reflection principle, a reflection principle inducing point symmetries for shrinking or conelike singularities was also shown in \cite{FL2,FLS2,LLS,KY} (see also \cite{FLS,FRUYY,IK,K2}).

These regular and singular versions of reflection principles for maximal surfaces are highly depend on the conformal structures of surfaces. However, as another possibility of a line reflection principle, {\it lightlike} line segments can appear on the boundaries of maximal surfaces. Unfortunately, it was shown in the author's previous work \cite{AF} that these boundary lightlike line segments appear as discontinuous boundary behaviors of conformal maximal immersions and hence conformal structures break down on these lines. Therefore, we cannot expect a conventional symmetry principle for lightlike line segments on maximal surfaces in general. In fact, a maximal surface with lightlike line segments along which neither a line symmetry nor planar symmetry holds was given in \cite[Section 4.5]{AF}.
In particular, as far as the authors know, the following problem was still open.

\begin{question} \label{q:1}
 Is there a reflection principle for boundary lightlike line segments of maximal surfaces?
\end{question}

\noindent We can also see a similar question in \cite[p.1095]{KKSY} for boundary lightlike line segments for timelike minimal surfaces in $\mathbb{L}^3$. 
 
Answering Question \ref{q:1}, in the present article we show a reflection principle for a boundary lightlike line segment connecting two shrinking singularities:

\begin{introtheorem}\label{thm:1}
	If a maximal graph $\Sigma$ has a lightlike line segment $L\subset \partial{\Sigma}$ connecting two shrinking singularities as the endpoints of $L$. Then $\Sigma$ can be extended to a surface with zero mean curvature across $L$ which is invariant under the point symmetry at the midpoint of $L$. 
\end{introtheorem}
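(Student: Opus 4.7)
The plan is to reflect $\Sigma$ directly through the midpoint $M$ of $L$ and to show that the two pieces glue across $L$ into a single real-analytic zero mean curvature surface. Writing $\Sigma=\{(x,y,u(x,y)):(x,y)\in D\}$ with $\tilde L\subset\partial D$ the planar projection of $L$ and $\tilde M$ its midpoint, define on the reflected domain $\tilde D:=\sigma_{\tilde M}(D)$ the reflected height
$$
\tilde u(x,y):=2M_3-u\bigl(2\tilde M_1-x,\,2\tilde M_2-y\bigr).
$$
Invariance of the zero mean curvature equation under rigid motions of $\mathbb{R}^3$ makes $\tilde u$ a zero mean curvature graph whose image is the point reflection of $\Sigma$ through $M$. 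The whole content of the theorem is then that $u^{*}:=u\cup\tilde u$ solves the zero mean curvature equation across $\tilde L$.

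The first step is a boundary analysis on the interior of $L$. Since $L$ is a line segment in $\partial\Sigma$, the restriction $u|_{\tilde L}$ is affine; combining this with the lightlike condition $u_x^2+u_y^2=1$ on $\tilde L$ forces $\nabla u|_{\tilde L}$ to be the unit vector parallel to $\tilde L$, and hence the gradient component normal to $\tilde L$ vanishes. Together with the symmetry $u|_{\tilde L}(p)+u|_{\tilde L}(\sigma_{\tilde M}(p))=2M_3$ coming from $M$ being the midpoint of the line $L$, this vanishing normal derivative forces $u$ and $\tilde u$ to have matching one-sided values and first derivatives on the interior of $\tilde L$, so that $u^{*}$ is $C^1$ across $\tilde L$ away from the two shrinking endpoints.

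The core step, and the principal obstacle, is verifying that $u^{*}$ satisfies the zero mean curvature equation across $\tilde L$. Because the equation degenerates precisely on the lightlike locus $|\nabla u|=1$, elliptic regularity is unavailable and $C^1$ matching does not by itself suffice. To obtain analyticity across $\tilde L$, I would pass to a conformal parameterization of a neighborhood of $L$ in $\Sigma$ by a half-disk $\mathbb{H}_+$ whose real diameter corresponds to $L$ and whose two endpoints correspond to the shrinking singularities; the Weierstrass data $(g,\omega)$ are then holomorphic on $\mathbb{H}_+$ with $|g|=1$ on the diameter, so $g$ extends meromorphically across by the Schwarz reflection $g(\bar z)=1/\overline{g(z)}$. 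The decisive use of the hypothesis is that the shrinking singularity condition at the two endpoints pins down the boundary behavior of $\omega$ so that $\omega$ admits a compatible analytic reflection producing the point (as opposed to line or planar) symmetry; this is exactly the obstruction whose nontriviality is demonstrated by the counterexample of \cite[Section~4.5]{AF}.

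Integrating the reflected Weierstrass data yields a real-analytic zero mean curvature surface on the full disk agreeing with $\Sigma$ on the upper half-disk. By uniqueness of analytic continuation and its coincidence with $u^{*}$ along $\tilde L$, the extension must equal $\tilde u$ on the lower half-disk, proving invariance under the point symmetry at $M$. The main technical difficulty will be pinning down the correct Schwarz reflection rule for $\omega$ at the shrinking endpoints, which is exactly where the shrinking hypothesis (as opposed to a generic lightlike boundary segment) becomes essential.
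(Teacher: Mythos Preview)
Your plan has a genuine structural gap in the ``core step.'' You propose to pass to a conformal parametrization of a neighborhood of $L$ by a half-disk $\mathbb{H}_+$ \emph{whose real diameter corresponds to $L$}, and then to reflect the Weierstrass data across that diameter. But this conformal picture is wrong: as recalled in Section~\ref{sec:2.2} (and proved in \cite[Theorem~1.1]{AF}), under any isothermal parametrization $X\colon\mathbb{D}\to\Sigma$ the entire lightlike segment $L$ collapses to a \emph{single boundary point} $w_0\in\partial\mathbb{D}$, realized as a jump discontinuity of the boundary function $\widehat{X}$. The shrinking-singularity hypothesis says precisely that the two arcs of $\partial\mathbb{D}$ immediately to the left and right of $w_0$ are mapped constantly to the endpoints $a$ and $b$ of $L$; there is no arc mapped onto $L$ itself. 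Consequently there is no diameter along which to impose $|g|=1$ and run a Schwarz reflection of $(g,\omega)$, and the argument as written cannot get off the ground.

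This is exactly the obstacle the paper's proof is designed to overcome. The actual method is a \emph{blow-up} of the single conformal boundary point: working over $\mathbb{H}$ with $w_0=0$, one precomposes with $\Pi(r,\theta)=re^{i\theta}$ on $D=\mathbb{R}\times(0,\pi)$, so that the collapsed segment $L$ is resolved by the angular variable $\theta$ at $r=0$. The Poisson integral is split as
\[
X=a\,P_{\chi_{(\sigma,0)}}+b\,P_{\chi_{(0,\tau)}}+P_W,
\]
where the two harmonic-measure terms are explicit $\Arg$-functions, real analytic on all of $D$ after the blow-up, while $P_W$ vanishes on $(\sigma,\tau)$ and therefore extends by the ordinary Schwarz reflection principle. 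The point symmetry then drops out from the identity $\Pi(-r,\pi-\theta)=\overline{\Pi(r,\theta)}$ together with $\Arg(\bar z)=-\Arg(z)$ and $P_W(\bar\zeta)=-P_W(\zeta)$. In short, the Schwarz reflection is applied not across $L$ (which has no conformal image as an arc) but across the arcs corresponding to the two shrinking singularities, and the blow-up is what lets one glue these two reflections into a single analytic extension; your proposal misses this mechanism.
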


For a more precise statement, see Theorem \ref{thm:Main}. 
The situation in Theorem \ref{thm:1} is very typical since a large number of maximal surfaces with lightlike line segments have shrinking singularities as their endpoints. See Figure \ref{fig:Periodic} and Example \ref{example:knownperiodic}.

\begin{figure}[htb]
 \begin{center}
  \includegraphics[height=3.5cm]{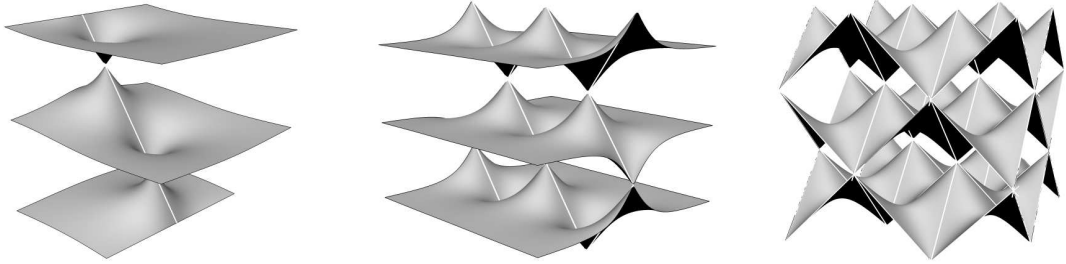} 
\caption{Examples of periodic maximal surfaces with lightlike lines (white lines) connecting shrinking singularities.}\label{fig:Periodic} 
 \end{center}
\end{figure}

Moreover, as an important application of Theorem \ref{thm:1}, we give a construction of new proper periodic maximal surfaces with lightlike line segments as in Figure \ref{fig:hexagon}.
\begin{figure}[h]
 \begin{center}
  \includegraphics[height=3.3cm]{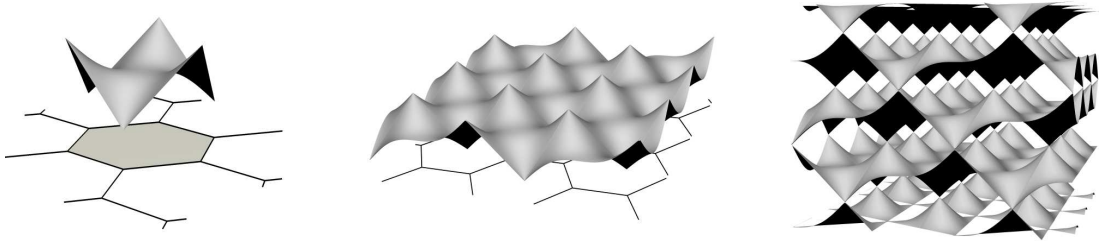} 
\caption{Doubly (center) and triply periodic (right) maximal surfaces generated by the regular hexagonal tiling of $\mathbb{R}^2$.}\label{fig:hexagon} 
 \end{center}
\end{figure}

The organization of this article is as follows: In Section \ref{sec:2.1}, we prepare a necessary condition for existence of extensions of maximal surfaces across lightlike line segments. In Section \ref{sec:2.2}, we explain how lightlike lines and shrinking singularities appear on the boundaries of maximal surfaces from aspects of harmonic function theory.
In Section \ref{sec:2.3}, we give a proof of Theorem \ref{thm:1}. This is achieved by function theoretical ``blow-up'' method of discontinuous boundary points of harmonic mappings, instead of the Schwarz reflection principle, which is a theorem about continuous boundary points of harmonic functions.
 In Section \ref{sec:3}, we give a construction procedure of some periodic maximal surfaces with lightlike line segments by using Theorem \ref{thm:1} and tessellations of $\mathbb{R}^2$ associated with symmetries appearing in Theorem \ref{thm:1}. Finally, we discuss in Section \ref{sec:4} the remaining cases, that is, reflection principles for entire lightlike lines and lightlike half-lines as a future question.

\section{blow-up of discontinuous boundary points}\label{sec:2}
In this paper, $\mathbb{L}^3$ denotes the Lorentz-Minkowski 3-space with signature $(+,+,-)$.
Throughout this section, unless otherwise noted, we let $\Sigma={\rm graph}(\psi)\subset \mathbb{L}^3$ be a maximal graph, defined by a smooth function $\psi\colon \Omega\to \mathbb{R}$ over a simply connected bounded Jordan domain $\Omega$ in the $xy$-plane. Here, we assume that $\Sigma$ contains a lightlike line segment in its boundary. In this section, we discuss an extension of $\Sigma$ across the boundary lightlike line segment.

\subsection{A necessary condition.}\label{sec:2.1}
For the existence of such an extension, a necessary condition was introduced in \cite[Definition 3.1]{AF}: Let $I\subset \partial \Omega$ be an open Euclidean segment, and $\tau\in \mathbb{R}^2$ the unit tangent vector to $I$ with the positive direction. Here, the positive direction is determined by the counterclockwise orientation of $\partial \Omega$. We say that $\psi$ {\it tamely degenerates to a future (resp. past)-directed lightlike line segment over $I$} if $\psi$ satisfies
\[
	\frac{\partial \psi}{\partial \tau}(x,y)=1+O({\rm dist}\left((x,y),J)^2\right)\ \ \left({\rm resp.}\ \ 
	\frac{\partial \psi}{\partial \tau}(x,y)=-1+O({\rm dist}\left((x,y),J)^2\right)\right)
\]
as $(x,y)\to J$ for each closed segment $J\subset I$, where $\partial/\partial \tau$ denotes the directional derivative in the $\tau$-direction. We simply say that $\psi$ {\it tamely degenerates to a lightlike line segment} if it tamely degenerates to a future or past-directed lightlike line segment. In this case, it is easily seen that $\partial \Sigma$ contains a lightlike line segment over $I$. Conversely, if $\psi$ extends to a $C^2$-function across $I$ and its graph contains a lightlike line segment $L$ over $I$, then $\psi$ tamely degenerates to $L$ (see \cite[Remark 3.3]{AF}). Therefore, $\psi$ needs to tamely degenerate to a lightlike line segment if $\Sigma$ extends across the lightlike line segment.

\subsection{Lightlike line with shrinking singularities.}\label{sec:2.2}
Let $X\colon \mathbb{D}\to \Sigma={\rm graph}(\psi)$ be an isothermal parametrization from the unit disk $\mathbb{D}=\{w\in \mathbb{C}\mid |w|<1\}$. The spacelike condition of $\Sigma$ implies that $\Sigma$ is bounded since $\psi$ is a locally $1$-Lipschitz function over a bounded domain. Thus, $X$ is a bounded harmonic mapping, and therefore can be written as a Poisson integral of some bounded mapping $\widehat{X}\colon \partial \mathbb{D} \to \mathbb{L}^3$ (see \cite[p.72, Lemma 1.2]{Katz}), that is,
\[
	X(w)=P^{\mathbb{D}}_{\widehat{X}}(w):=\frac{1}{2\pi}\int^{2\pi}_0 \frac{1-|w|^2}{|e^{it}-w|^2}\ \widehat{X}(e^{it})dt.
\]
It is well-known that if $w_0 \in \partial \mathbb{D}$ is a jump point of $\widehat{X}$, that is, the one-sided limits
\[
	a:={\rm ess.}\! \lim_{\hspace*{-4ex} 0<t\to 0} \widehat{X}(w_0 e^{-it}),\ \ b:={\rm ess.}\! \lim_{\hspace*{-4ex} 0<t\to 0} \widehat{X}(w_0 e^{it})
\]
exist and are different, then the cluster point set $C(X, w_0)$ of $X$ at $w_0$ becomes a straight line segment joining $a$ and $b$. In \cite[Theorem 1.1]{AF}, the authors showed that if $\psi$ tamely degenerates to a lightlike line segment $L$, then there exists a jump point $w_0\in \partial \mathbb{D}$ of $\widehat{X}$ such that $L\subset C(X,w_0)$. In other words, the boundary lightlike line segment corresponds to a single point under an isothermal parametrization.

\begin{definition}\label{def:LLLwithShrinkSing}
Let $L\subset \partial \Sigma$ be a lightlike line segment and $a,b\in\mathbb{L}^3$ ($a\neq b$) its endpoints. We say that $L$ has \textit{shrinking singularities at the endpoints} if there exists $w_0=e^{it_0} \in\partial \mathbb{D}$ such that $\widehat{X}(e^{it})\equiv a$ for all $t\in (t_0-\varepsilon,t_0)$ and $\widehat{X}(e^{it})\equiv b$ for all $t\in (t_0,t_0+\delta)$ for some $\varepsilon,\delta >0$.
\end{definition}

\begin{remark}
The map $\widehat{X}$ is determined up to sets of measure zero. Thus, Definition \ref{def:LLLwithShrinkSing} requires that $\widehat{X}$ satisfies the conditions after an appropriate modification on a set of measure zero.
\end{remark}

\noindent
In this case, the isothermal parametrization $X$ extends to a harmonic mapping across each of the two arcs $(t_0-\varepsilon,t_0)$ and $(t_0,t_0+\delta)$ by the Schwarz reflection principle. The extended map becomes a {\it generalized maximal surface in the sense of \cite{ER}} and has shrinking singularities at $a$ and $b$. Here, the definitions of the generalized maximal surfaces and the shrinking singularities were introduced in \cite{ER} and \cite{KY}, respectively. Furthermore, by \cite[Corollary 3.7]{AF}, we can see $\psi$ tamely degenerates to $L$. 

There are useful ways to check whether the endpoints of a lightlike line segment are shrinking singularities or not from the shape of $\Sigma$. Such methods will be given in Section \ref{subsec:findShrinkSing}.

\subsection{Reflection principle for lightlike line segments.}\label{sec:2.3}
Recall that a lightlike line segment $L\subset \partial \Sigma$ corresponds to a single point under an isothermal parametrization. Thus, this isothermal parametrization seems to be quite useless to construct an extension of $\Sigma$ across $L$. However, we introduce a modification method of the isothermal parameter like a ``blow-up'', and consequently we will prove that $\Sigma$ can be extended across $L$ with some reflection symmetry if $L$ has shrinking singularities at its endpoints.

To explain the modification, it is useful to consider the Poisson integral for the upper half plane $\mathbb{H}=\{\zeta\in \mathbb{C}\mid {\rm Im}(\zeta)>0\}$. For a bounded piecewise continuous function $U\colon \mathbb{R}\to \mathbb{R}$, the Poisson integral is defined by
\[
	P_U(\zeta)=\frac{1}{\pi}\int_{-\infty}^{\infty}\frac{\eta}{(\xi-s)^2+\eta^2}U(s)ds,
\]
where $\zeta=\xi+i\eta$. We remark that the Poisson integrals for the upper half plane and the unit disk are related by the M\"obius transformation $\Phi(\zeta)=(\zeta-i)/(\zeta+i)$, more precisely, $P_U(\Phi^{-1}(w))=P^{\mathbb{D}}_{U\circ \Phi^{-1}}(w)$ holds (see \cite[Chap.6]{Ahlfors} for basic properties of the Poisson integrals). Let $I=(\sigma,\tau)\ (\sigma<\tau)$ be an interval and $\chi_I$ its characteristic function. Then we can easily see that the harmonic measure of $I$ is given by
\begin{align*}
	P_{\chi_I}(\zeta)&=\frac{1}{\pi}\left\{ {\rm Arg}(\tau-\zeta)-{\rm Arg}(\sigma-\zeta)\right\}\\
									&=\frac{1}{\pi}\left\{ {\rm Arg}^+(\tau-\zeta)-{\rm Arg}^+(\sigma-\zeta)\right\}.
\end{align*}
Here, $\Arg$ and $\Arg^+$ denote the harmonic blanches of $\arg$ defined on $\mathbb{C}\setminus (-\infty,0]$ and $\mathbb{C}\setminus [0,\infty)$ which take values in $(-\pi,\pi)$ and $(0,2\pi)$, respectively. Note that
\begin{equation}
	\Arg^+(-\zeta)=\Arg(\zeta)+\pi \label{eq:arg_arg+}
\end{equation}
holds for $\zeta\in\mathbb{C}\setminus(-\infty,0]$.

\begin{theorem}[Reflection principle for lightlike lines] \label{thm:Main}
If a lightlike line segment $L\subset \partial \Sigma$ has shrinking singularities at its endpoints $a, b\in \mathbb{L}^3$, then $\Sigma$ extends to a surface with zero mean curvature across $L$. 

Further, the extended surface is invariant under the point symmetry with respect to the midpoint $c=(a+b)/2$ of $L$. In particular, it is maximal, except along $L$.
\end{theorem}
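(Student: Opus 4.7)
The plan is to perform a conformal ``logarithmic blow-up'' at the contracted boundary point representing $L$ under the isothermal parametrization, turning the discontinuous boundary datum into a continuous affine parametrization of $L$ on an ideal boundary line; the point symmetry in the statement is then naturally implemented as an anti-holomorphic involution of the resulting strip. First, by composing the isothermal parametrization with a Möbius transformation of the domain, one may assume $w_0 = 0 \in \partial \mathbb{H}$ and $\widehat{X} \equiv a$ on $(-\varepsilon, 0)$, $\widehat{X} \equiv b$ on $(0, \delta)$. Using the Poisson integral on $\mathbb{H}$ together with the explicit harmonic-measure formulas $P_{\chi_I}$ recorded immediately before the theorem, the discontinuity at $0$ can be isolated as a single logarithmic-type term: one writes
\[
X(\zeta) = \frac{a - b}{\pi}\operatorname{Arg}(-\zeta) + H(\zeta),
\]
where $H\colon \mathbb{H} \to \mathbb{L}^3$ is harmonic and has the constant boundary trace $a$ on $(-\varepsilon, \delta)$ (obtained by matching the limiting values of $X$ with those of $\operatorname{Arg}(-\,\cdot\,)$ taken non-tangentially from either side of $0$); hence $H$ extends harmonically across $(-\varepsilon, \delta)$ by Schwarz reflection, in particular through $\zeta = 0$, and $H(0) = a$.

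Next, introduce the blow-up coordinate $\omega = \operatorname{Log}(-\zeta)$, so that $\zeta = -e^\omega$. This conformally maps a punctured upper half-disk neighborhood of $0 \in \mathbb{H}$ onto the half-strip $S = \{\omega \in \mathbb{C} : -\pi < \operatorname{Im}\omega < 0,\ \operatorname{Re}\omega < \log r\}$, carrying the two bounding arcs to the horizontal sides $\operatorname{Im}\omega = 0$ and $\operatorname{Im}\omega = -\pi$, and the shrinking singularity $\zeta = 0$ to the ideal line $\operatorname{Re}\omega = -\infty$. In the new coordinate $X(\omega) = \tfrac{a - b}{\pi}\operatorname{Im}\omega + H(-e^\omega)$, and since $H$ is harmonic through $0$ with $H(0) = a$, the correction $H(-e^\omega)$ extends real-analytically to the ideal line with constant limiting value $a$. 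Thus the blown-up parametrization extends continuously to the ideal line with trace $a + \tfrac{a-b}{\pi}\operatorname{Im}\omega$, an affine parametrization of $L$ from $a$ (at $\operatorname{Im}\omega = 0$) to $b$ (at $\operatorname{Im}\omega = -\pi$). The discontinuous boundary point has thereby been unwrapped into a bona fide continuous boundary segment, precisely the image of $L$.

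Finally, on the ideal line the point symmetry $p \mapsto 2c - p$ about $c = (a+b)/2$ corresponds, through the affine trace, to the antipodal map $\operatorname{Im}\omega \mapsto -\pi - \operatorname{Im}\omega$, which extends to the anti-holomorphic involution $\iota(\omega) = \bar\omega - i\pi$ of $S$ (swapping the top and bottom sides, fixing the midline $\operatorname{Im}\omega = -\pi/2$). Parametrize $\Sigma' = 2c - \Sigma$ on a second copy of $S$ by $X'(\omega) = 2c - X(\omega)$; its boundary trace on the ideal line is $b - \tfrac{a-b}{\pi}\operatorname{Im}\omega$, running from $b$ to $a$. Gluing the two copies of $S$ along the ideal lines via $\omega \leftrightarrow \iota(\omega)$, the two traces match pointwise on $L$, so the blown-up parametrizations combine into a continuous parametrization of $\Sigma \cup L \cup \Sigma'$ across $L$. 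Since $\Sigma'$ is the image of $\Sigma$ under the Lorentzian isometry $p \mapsto 2c - p$, each piece has zero mean curvature, so the union is a surface with zero mean curvature on its spacelike locus, invariant by construction under the point symmetry at $c$.

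The main obstacle is the second paragraph's verification that the blown-up parametrization extends continuously across the ideal boundary with precisely the linear leading coefficient $\tfrac{a-b}{\pi}$; this hinges on the constant-boundary-trace property of $H$ on $(-\varepsilon, \delta)$ (a consequence of the shrinking-singularity boundary data), together with the lightlikeness of $a - b$ ensuring that the logarithmic singularity is compatible with the conformality of $X$ (the pole in $X_\zeta$ with lightlike coefficient contributes $0$ to $X_\zeta \cdot X_\zeta$). The subsequent gluing and point-symmetry invariance then follow structurally from the affine form of the ideal-line trace.
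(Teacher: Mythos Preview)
Your decomposition $X(\zeta)=\tfrac{a-b}{\pi}\Arg(-\zeta)+H(\zeta)$, with $H$ bounded, harmonic, and having constant boundary trace $a$ on $(-\varepsilon,\delta)$, is correct and is essentially the same splitting the paper performs (into harmonic measures of $(\sigma,0)$ and $(0,\tau)$ plus a Schwarz--reflectable remainder $P_W$). The strategy---blow up the jump point and isolate the logarithmic singularity---is also the paper's strategy.

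The genuine gap is in your choice of blow-up coordinate. Under $\omega=\operatorname{Log}(-\zeta)$ the point $\zeta=0$ is sent to $\Re\omega=-\infty$, so your ``ideal line'' lies at infinity in the $\omega$-plane. What you obtain is therefore only an \emph{asymptotic} boundary trace $a+\tfrac{a-b}{\pi}\Im\omega$ as $\Re\omega\to-\infty$, not a parametrization of $L$ by finite domain points. Gluing two half-strips along their ideal boundaries at infinity via $\iota(\omega)=\bar\omega-i\pi$ yields at best a continuous topological union $\Sigma\cup L\cup\Sigma'$, and indeed you claim only continuity. But the theorem asserts that $\Sigma$ extends to a \emph{surface} with zero mean curvature across $L$; this requires a smooth (the paper obtains real analytic) extension, and two maximal graphs that merely meet continuously along a lightlike segment need not form a single $C^2$ surface. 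Your final sentence (``zero mean curvature on its spacelike locus'') sidesteps rather than addresses this.

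The paper fixes this by using the polar blow-up $\Pi(r,\theta)=re^{i\theta}$ on $D=\mathbb{R}\times(0,\pi)$ instead of the logarithm. Here the lightlike segment corresponds to the \emph{finite} line $\{r=0\}\subset D$, the singular term becomes simply $\Arg^+(-\zeta)=\theta+\pi$, and $X\circ\Pi$ extends real analytically across $\{r=0\}$ to $r<0$; the point symmetry is then the explicit involution $(r,\theta)\mapsto(-r,\pi-\theta)$. Your coordinate is related to the paper's by $r=e^{\Re\omega}$, $\theta=\Im\omega+\pi$, and the crucial extension to $r\le 0$ is exactly what the exponential $r=e^{\Re\omega}>0$ cannot reach. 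Replacing your logarithmic chart by this polar chart would make your argument go through.
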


\begin{proof}
Let $X\colon \mathbb{H}\to \Sigma$ be an isothermal parametrization from the upper half plane. Then, $X$ can be written as a Poisson integral of some bounded map $\widehat{X}\colon \mathbb{R}\simeq \partial{\mathbb{H}}\to \mathbb{L}^3$. In this case, the assumption that $L$ has shrinking singularities at its endpoints $a$ and $b$ is translated into the fact that there exists $s_0\in \mathbb{R}$ such that $\widehat{X}\equiv a$ on an interval $(s_0 +\sigma,s_0)$ and $\widehat{X}\equiv b$ on an interval $(s_0,s_0+\tau)$, where $\sigma<0<\tau$. By composing a M\"obius transformation, we may assume $s_0=0$.

Let $\Pi\colon D^+:=\mathbb{R}_{>0}\times (0,\pi)\to \mathbb{H}$ be a homeomorphism defined by $\Pi(r,\theta)=re^{i\theta}$. Observe that $\Pi$ is real analytic on a domain $D:=\mathbb{R}\times (0,\pi)$ wider than $D^+$, and $\Pi(D)=\mathbb{H}\cup \{0\} \cup \mathbb{H}^{\ast}$, where $\mathbb{H}^{\ast}$ denotes the lower half plane. We prove that the real analytic map $X\circ \Pi$ on $D^+$ extends to $D$ real analytically. First, the linearity of the Poisson integral implies
\begin{align*}
	X(\zeta)=&P_{\widehat{X}}(\zeta)=a P_{\chi_{(\sigma,0)}}(\zeta)+bP_{\chi_{(0,\tau)}}(\zeta)+P_W(\zeta)\\
						&=\frac{a}{\pi}\left\{\Arg^+(-\zeta)-\Arg^+(\sigma-\zeta)\right\}+\frac{b}{\pi}\left\{\Arg(\tau-\zeta)-\Arg(-\zeta)\right\}+P_W(\zeta),
\end{align*}
where $\zeta=\Pi(r,\theta)$ and $W=(1-\chi_{(\sigma,\tau)})\widehat{X}$. By (\ref{eq:arg_arg+}), we have $\Arg^+(-\zeta)=\Arg(re^{i\theta})+\pi=\theta+\pi,$ and $\Arg(-\zeta)=\Arg^+(re^{i\theta})-\pi=\theta-\pi$. Thus,
\[
	X(\zeta)=a+b+\frac{a-b}{\pi}\theta +\frac{b}{\pi}\Arg(\tau-\zeta)-\frac{a}{\pi}\Arg^+(\sigma-\zeta)+P_W(\zeta).
\]
First three terms are clearly real analytic on $D$. Further since $\left(\sigma-\Pi(D)\right)\subset \mathbb{C}\setminus [0,+\infty)$ and $\left(\tau-\Pi(D)\right)\subset \mathbb{C}\setminus(-\infty,0]$, the fourth and the fifth terms are also real analytic on $D$. Finally, observe that $P_W$ is harmonic on $\mathbb{H}$, continuous on $\mathbb{H}\cup (\sigma,\tau)$ and $P_W\equiv (0,0,0)$ on $(\sigma,\tau)$. Thus the Schwarz reflection principle implies that $P_W$ extends to $\mathbb{H}\cup (\sigma, \tau) \cup \mathbb{H}^{\ast}$ harmonically by $P_W(\overline{\zeta})=-P_W(\zeta)$. Since $\Pi(D)\subset \mathbb{H}\cup (\sigma, \tau) \cup \mathbb{H}^{\ast}$, we conclude that $P_W\circ\Pi$ and therefore $X\circ\Pi$ are real analytic on $D$. Thus, $\Sigma$ extends to a surface with zero mean curvature across $L$.

For the latter statement, note that $\Pi(-r,\pi-\theta)=re^{-i\theta}=\overline{\zeta}$ for $\zeta=\Pi(r,\theta)$. Thus
\begin{align*}
	X\circ\Pi(-r,\pi-\theta)&=a+b+\frac{a-b}{\pi}(\pi-\theta)\\
						&\ \ \ \ \ \ \ \ \  +\frac{b}{\pi}\Arg\left(\overline{(\tau-\zeta)}\right)-\frac{a}{\pi}\Arg^+\left(\overline{(\sigma-\zeta)}\right)+P_W(\overline{\zeta})\\
						&=a+b+a-b-\frac{a-b}{\pi}\theta\\
						&\ \ \ \ \ \ \ \ \  +\frac{b}{\pi}\left(-\Arg(\tau-\zeta)\right)-\frac{a}{\pi}\left(2\pi-\Arg^+(\sigma-\zeta)\right)-P_W(\zeta)\\
						&=-\frac{a-b}{\pi}\theta-\frac{b}{\pi}\Arg(\tau-\zeta)+\frac{a}{\pi}\Arg^+(\sigma-\zeta)-P_W(\zeta).		
\end{align*}
We have $X\circ \Pi(-r,\pi-\theta)+X\circ\Pi(r,\theta)=a+b$. This implies the desired reflection symmetry.
\end{proof}

\begin{figure}[htb]
 \begin{center}
 \includegraphics[height=3.1cm]{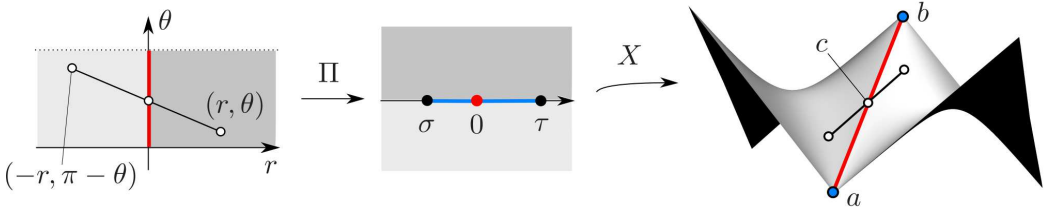} 
\caption{Blow-up of a discontinuous point of $\widehat{X}$ and the real analytic extension $X\circ \Pi$ across a boundary lightlike line segment.}\label{fig:extension} 
 \end{center}
\end{figure}

\begin{remark} In the above situation, it holds that
\[
	X\circ\Pi(0,\theta)=b+\frac{a-b}{\pi}\theta=a\frac{\theta}{\pi}+b\left(1-\frac{\theta}{\pi}\right).
\]
Therefore, $X\circ \Pi(0,\theta)$ is the dividing point of $L$ which divides $L$ into two segments with Euclidean lengths $(1-\theta/\pi):\theta/\pi$. 
\end{remark}

\subsection{Methods of finding shrinking singularities on lightlike lines.} \label{subsec:findShrinkSing}
At the end of this section, we give some useful criteria for the endpoint of a lightlike line segment boundary to be a shrinking singularity. By using this, we can find shrinking singularities on lightlike line segments from the shape of the surfaces. 

Let $L\subset \partial \Sigma$ be a lightlike line segment to which $\psi$ tamely degenerates. Further, let $X\colon \mathbb{D}\to \Sigma={\rm graph}(\psi)$ be an isothermal parametrization, and $\widehat{X}\colon \partial \mathbb{D}\to \mathbb{L}^3$ its boundary value function, that is, $X(w)=P^{\mathbb{D}}_{\widehat{X}}(w)$. Then, as mentioned above, there is a jump point $w_0 \in \partial \mathbb{D}$ of $\widehat{X}$ such that $L\subset C(X,w_0)$. If $\widehat{X}$ maps some arc of one side of $w_0$ constantly to an endpoint of $L$, we say that the endpoint is a \textit{shrinking singularity}. Then, $L$ has shrinking singularities at its endpoints if and only if both of the two endpoints are shrinking singularities.

One of the most typical situations where shrinking singularities appear on lightlike line segments is as follows.

\begin{fact}[{\cite[Theorem 4.5]{AF}}] \label{fact:2LLL}
Assume that $\partial \Sigma$ contains two adjacent lightlike line segments $L_1$ and $L_2$ whose union does not form a straight line segment. If $\psi$ tamely degenerates to $L_1$ and $L_2$, respectively, then the common endpoint of $L_1$ and $L_2$ is a shrinking singularity. 

\end{fact}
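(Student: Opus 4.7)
The plan is to reduce the claim to a boundary-topological statement about the trace $\widehat{X}\colon \partial\mathbb{D}\to \mathbb{L}^3$ of the isothermal parametrization $X=P^{\mathbb{D}}_{\widehat{X}}\colon \mathbb{D}\to \Sigma$ near the common endpoint $p$ of $L_1$ and $L_2$, and then to verify the criterion for a shrinking singularity stated just above Fact~\ref{fact:2LLL}. First, the jump-point correspondence recalled before Definition~\ref{def:LLLwithShrinkSing} (originally \cite[Theorem 1.1]{AF}), applied to each of the two tame degenerations, produces jump points $w_1,w_2\in\partial\mathbb{D}$ with $L_j\subset C(X,w_j)$ for $j=1,2$; at each $w_j$, the cluster set is the straight segment joining the two one-sided essential limits. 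I then argue $w_1\neq w_2$: if $w_1=w_2=:w_0$, then $L_1\cup L_2\subset C(X,w_0)$ would lie on a single straight line, contradicting the hypothesis that $L_1\cup L_2$ is not a straight segment. Hence $w_1$ and $w_2$ split $\partial\mathbb{D}$ into two open arcs; denote by $A$ the open arc on the side of the corner of $\partial\Sigma$ at $p$, so that the endpoint of $L_1$ adjacent to $L_2$ is the one-sided essential limit of $\widehat{X}$ at $w_1$ from within $A$, and symmetrically at $w_2$.

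The crux is to show $\widehat{X}\equiv p$ almost everywhere on $A$. Since $X\colon \mathbb{D}\to \Sigma$ is a harmonic diffeomorphism of topological disks, and by the adjacency hypothesis the lightlike segments $L_1, L_2$ exhaust the accessible boundary of $\Sigma$ near $p$ other than $\{p\}$ itself, the boundary correspondence on $A$ is forced into $\{p\}$. More precisely, no further jump point of $\widehat{X}$ can lie in $A$: any additional jump would correspond (via the same \cite[Theorem 1.1]{AF} correspondence) to another lightlike boundary segment of $\partial \Sigma$ lying between $L_1$ and $L_2$ on the corner side, contradicting adjacency. Therefore $\widehat{X}|_A$ is continuous (after modification on a null set) with one-sided limits $p$ at both endpoints; the image $\widehat{X}(A)$ is a connected subset of $\partial \Sigma\setminus (L_1\cup L_2)$ lying in a neighborhood of $p$; and since the latter set locally reduces to $\{p\}$, we conclude $\widehat{X}|_A\equiv p$. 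The condition for $p$ to be a shrinking singularity thus holds for both $L_1$ and $L_2$.

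The main obstacle I expect is the rigorous identification of the boundary values of $\widehat{X}$ on $A$. One must ensure that no measure-zero anomaly allows $\widehat{X}|_A$ to take values in $L_1\cup L_2$, and, more fundamentally, that a continuous boundary extension of $X$ to $\mathbb{D}\cup A$ exists and maps into the topologically isolated piece $\{p\}$ of $\partial\Sigma$. This should be handled by combining a Fatou-type non-tangential-limit argument with the fact that $X$ is a harmonic diffeomorphism onto the topological disk $\Sigma$, so that a Carath\'eodory-style boundary-correspondence extension can be built on $\mathbb{D}\cup A$ with image in $\overline{\Sigma}$, after which the topological pinch at $p$ forces the desired constancy of $\widehat{X}|_A$.
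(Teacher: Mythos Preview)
Your overall strategy matches the one behind \cite[Theorem 4.5]{AF}: produce distinct jump points $w_1\neq w_2$ for $L_1,L_2$ and then argue that $\widehat{X}$ is constantly equal to $p$ on the appropriate arc $A$ between them. Your argument for $w_1\neq w_2$ is fine.

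The genuine gap is in the second half. You assert that (i) no further jump of $\widehat{X}$ can occur on $A$, (ii) absence of jumps makes $\widehat{X}|_A$ continuous, and (iii) the image $\widehat{X}(A)$ lands in $\partial\Sigma\setminus(L_1\cup L_2)$ near $p$. None of these follows from the Fatou/Poisson framework alone: an $L^\infty$ boundary function can have essential (non-jump) discontinuities, and nothing so far forces $\widehat{X}(A)\subset\partial\Sigma$, let alone near $p$. In (i) you invoke \cite[Theorem 1.1]{AF} in the wrong direction: that result says a tame lightlike degeneration produces a jump, not that every jump arises from one. Your proposed remedy via a ``Carath\'eodory-style'' extension is the right instinct aimed at the wrong class of maps; Carath\'eodory's theorem is about conformal homeomorphisms, and univalent \emph{harmonic} maps need not admit a homeomorphic boundary extension.

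What the paper's proof actually uses (as is visible from the proof of Proposition~\ref{prop:slitVer}, which is declared ``completely the same'') is the Hengartner--Schober boundary structure theorem \cite[Theorem~4.3]{HS}, restated in \cite[Lemma~3.4]{AF}, for univalent harmonic mappings onto domains with locally connected boundary. That theorem gives exactly the structure you are trying to argue by hand: the unrestricted radial limit of $X$ exists off a countable set, takes values in $\partial\Sigma$, and yields a monotone boundary correspondence in which $\partial\mathbb{D}$ decomposes into countably many jump points, open arcs sent to single boundary points, and arcs mapped homeomorphically. With Hengartner--Schober in hand, your outline becomes a proof: monotonicity together with the adjacency of $L_1$ and $L_2$ in $\partial\Sigma$ forces the arc $A$ between $w_1$ and $w_2$ to be a constant arc with value $p$. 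So the missing ingredient is precisely this structure theorem; once you invoke it, steps (i)--(iii) are immediate.
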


\noindent
More precisely, Fact \ref{fact:2LLL} says that if we let $w_1, w_2\in \partial \mathbb{D}$ be corresponding jump points to $L_1$ and $L_2$, respectively, then $w_1\neq w_2$ and one of the two arcs of $\partial \mathbb{D}$ joining $w_1$ and $w_2$ is constantly mapped to the common endpoint of $L_1$ and $L_2$ by $\widehat{X}$, see Figure \ref{fig:2lines}.

\begin{figure}[htb]
 \begin{center}
  \includegraphics[height=3.6cm]{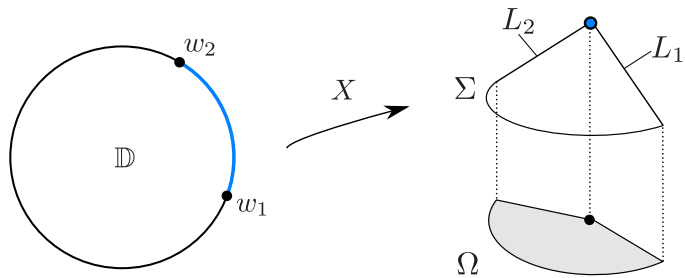} 
\caption{A situation to which Fact \ref{fact:2LLL} is applicable.}\label{fig:2lines} 
 \end{center}
\end{figure}

We shall remark that the same is true even if $\Omega$ has a slit (see Figure \ref{fig:withSlit}).

\begin{proposition}\label{prop:slitVer}
Let $I\subset \Omega$ be an open Euclidean segment joining an interior point $p\in \Omega$ and a boundary point. Assume that a maximal graph $\Sigma={\rm graph}(\psi)$ over a domain $\Omega \setminus \overline{I}$ contains two lightlike line segments $L_1$ and $L_2$  (possibly coinciding with each other) in its boundary over the slit $I$. If $\psi$ tamely degenerates to $L_1$ and $L_2$, respectively, then the common endpoint of $L_1$ and $L_2$ over $p$ is a shrinking singularity.
\end{proposition}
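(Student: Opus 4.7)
The plan is to adapt the proof of Fact \ref{fact:2LLL} to the slit setting. Let $X\colon \mathbb{D}\to \Sigma$ be an isothermal parametrization with boundary value $\widehat{X}$. Since $\Omega\setminus\overline{I}$ is simply connected, its topological prime-end boundary is a circle decomposed cyclically into an arc along $\partial \Omega$, an arc for the upper side of $I$, the single prime end at the tip $p$, and an arc for the lower side; these pull back under the boundary extension of the isothermal parametrization to consecutive regions on $\partial \mathbb{D}$.

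Applying \cite[Theorem 1.1]{AF} to each of $L_1$ and $L_2$, the tame degeneracy produces jump points $w_1,w_2\in\partial\mathbb{D}$ of $\widehat{X}$ with $L_i\subset C(X,w_i)$. Reasoning as in the proof of Fact \ref{fact:2LLL}, the jump point $w_i$ lies in the $\partial \mathbb{D}$-region corresponding to the side of the slit on which the tame degeneracy holds. Because those two regions are disjoint, $w_1\neq w_2$ is automatic---this is the key point where the slit setting departs from Fact \ref{fact:2LLL}, and it covers the degenerate case $L_1=L_2$ in $\mathbb{L}^3$, since $L_1$ and $L_2$ are nonetheless distinct as boundary arcs of the Riemann surface $\Sigma$ regardless. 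The central step is then that $\widehat{X}$ is constantly equal to the common endpoint $q$ on the arc $A\subset\partial\mathbb{D}$ between $w_1$ and $w_2$ corresponding to the tip: any cluster value of $X(\zeta)$ as $\zeta\to w\in A$ must project under $\pi$ to $p$, and $\pi^{-1}(p)\cap\overline{\Sigma}=\{q\}$, so $\widehat{X}\equiv q$ on $A$ and $q$ is a shrinking singularity.

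The main obstacle is to justify rigorously the prime-end correspondence so that the tip arc $A$ is nondegenerate and parametrized only by limits in $\{q\}$. A convenient device is the local conformal unfolding $\zeta\mapsto p+\zeta^2$, which converts a slit-disk neighborhood of $p$ into an upper-half-disk neighborhood in which the two sides of $I$ become two real intervals meeting at the origin; in this unfolded picture the argument of Fact \ref{fact:2LLL} carries over almost verbatim and bypasses its ``$L_1\cup L_2$ is not a line segment'' hypothesis, since the two unfolded boundary arcs are honestly distinct on the Riemann surface.
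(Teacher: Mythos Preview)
Your outline follows the same overall skeleton as the paper's argument (locate jump points $w_1,w_2$, argue they are distinct, show $\widehat X$ is constant on the arc between them), but you miss the single observation that makes the proof immediate and instead introduce machinery that creates new problems.

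The paper's proof is one line: the argument of \cite[Theorem 4.5]{AF} (i.e.\ Fact~\ref{fact:2LLL}) goes through verbatim once one notes that the Hengartner--Schober theorem \cite[Theorem 4.3]{HS}, which underlies both \cite[Theorem 1.1]{AF} and \cite[Theorem 4.5]{AF}, is stated for arbitrary bounded simply connected domains with locally connected boundary, hence in particular for slit domains. That is the whole point; nothing else needs to be checked. You never mention Hengartner--Schober, yet it is precisely this result that justifies the boundary decomposition you describe informally via prime ends, and that guarantees the ``tip arc'' $A$ is genuinely an arc on which $\widehat X$ is constant.

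Your proposed workaround, the conformal unfolding $\zeta\mapsto p+\zeta^2$, has a real gap: the maximal surface equation is not conformally invariant in the base domain, so $\psi(p+\zeta^2)$ is not a maximal graph and Fact~\ref{fact:2LLL} cannot be applied to it. Even if you intend only to transport the \emph{proof technique} rather than the statement, you would then have to re-verify tame degeneracy in the new coordinates (the $O(\mathrm{dist}^2)$ condition does not obviously survive the square map at the tip) and redo the Hengartner--Schober analysis anyway. The prime-end discussion is also slightly misleading: Carath\'eodory's prime-end correspondence governs \emph{conformal} boundary extensions, whereas what is needed here is the boundary behaviour of the harmonic diffeomorphism $\pi\circ X$, which is exactly the content of Hengartner--Schober.
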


\begin{figure}[htb]
 \begin{center}
  \includegraphics[height=3.6cm]{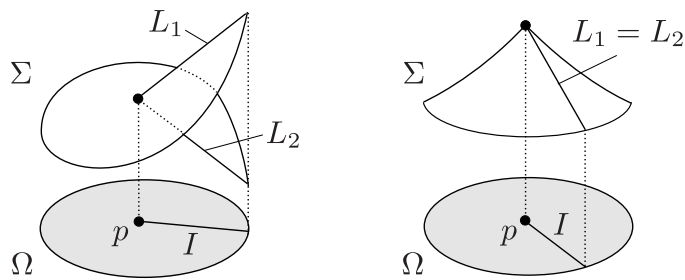} 
\caption{Situations to which Proposition \ref{prop:slitVer} are applicable.}\label{fig:withSlit} 
 \end{center}
\end{figure}

\begin{proof}
The proof is completely the same as \cite[Theorem 4.5]{AF}. To do this, we only need to check that the Hengartner-Schober theorem \cite[Theorem 4.3]{HS}, \cite[Lemma 3.4]{AF} can be applied to $\Omega\setminus \overline{I}$. However, in fact, this theorem can be applied to any bounded simply connected domain with locally connected boundary as stated in their original paper \cite[Theorem 4.3]{HS} (see also \cite[Section 3.3]{D}).
\end{proof}

We explain how to use Fact \ref{fact:2LLL} and Proposition \ref{prop:slitVer} with examples below: 

\begin{example}\label{example:knownperiodic}
The first example is a singly periodic {\it maximal surface of Riemann type} illustrated on the left side of Figure \ref{fig:Periodic}, which is given by
\begin{equation*}
\mathcal{S}_1=\{ (x,y,t)\in \mathbb{L}^3 \mid 2(-x + t)\sin{t}-(x^2 + y^2 - 2 x t+ t^2)\cos{t}=0\},
\end{equation*}
 (see \cite[Theorem 5.3 (1-i)]{A}). Choose one sheet of $\mathcal{S}_1$ as on the left of Figure \ref{fig:example1}. Then the sheet is an entire graph which is maximal except at two points that are isolated singularities and the lightlike line segment joining them. If we restrict the graph to a domain $\Omega_1$ like in the right side of Figure \ref{fig:example1}, then the restricted maximal graph tamely degenerates to the lightlike line segment from each side, since it extends to the entire graph. Thus Proposition \ref{prop:slitVer} implies that the lightlike line segment has shrinking singularities at its endpoints. Consequently, $\mathcal{S}_1$ is invariant under the point symmetry at the midpoint of the lightlike line segment by Theorem \ref{thm:Main}. 
Note that maximal surfaces of Riemann type without lightlike lines were determined via the Weierstrass representation by L\'opez-L\'opez-Souam \cite{LLS}. 
 
\begin{figure}[htb]
 \begin{center}
  \includegraphics[height=3.8cm]{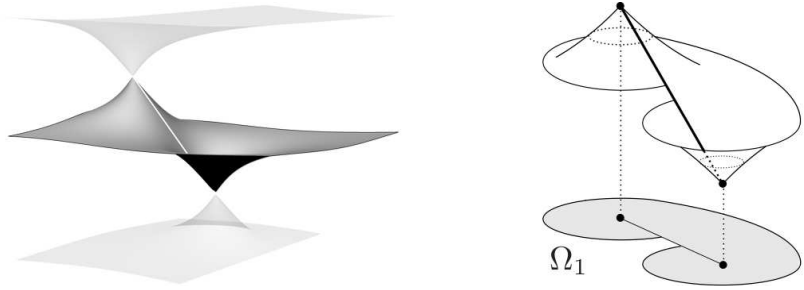} 
\caption{One sheet of $\mathcal{S}_1$ (left) and a restricted graph (right).}\label{fig:example1} 
 \end{center}
\end{figure}

The second and third examples are the following doubly and triply periodic maximal surfaces illustrated in the center and on the right of Figure \ref{fig:Periodic}, respectively:
\begin{align*}
\mathcal{S}_2&=\{ (x,y,t)\in \mathbb{L}^3 \mid \cos{t} \cosh{y}+ \cos{x}=0\},\\
 \mathcal{S}_3&=\{ (x,y,t)\in \mathbb{L}^3 \mid \cos{t} - \cos{x} \cos{y}=0\}.
\end{align*}
For instance, we similarly take one sheet of $\mathcal{S}_2$ as on the left of Figure \ref{fig:example2}. Then the sheet is an entire maximal graph with isolated singularities and lightlike line segments. In this case, for each isolated singularity there are two lightlike line segments joining it, and thus Fact \ref{fact:2LLL} is applicable to the graph restricted to a domain $\Omega_2$ as on the right side of Figure \ref{fig:example2}, by the same argument as for $\mathcal{S}_1$. Therefore, each lightlike line segment has shrinking singularities at its endpoints, and $\mathcal{S}_2$ is invariant under the point symmetry about this midpoint. 

\begin{figure}[htb]
 \begin{center}
  \includegraphics[height=3.8cm]{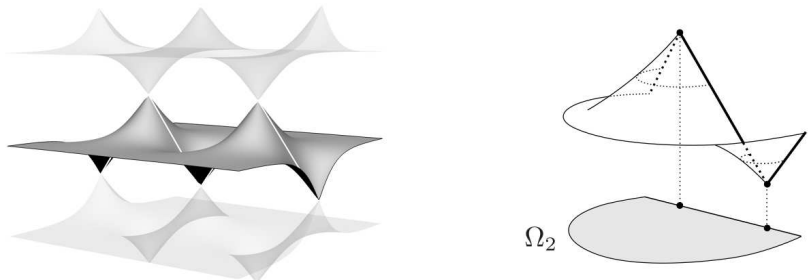} 
\caption{One sheet of $\mathcal{S}_2$ (left) and a restricted graph (right).}\label{fig:example2} 
 \end{center}
\end{figure} 

The same is true for the surface $\mathcal{S}_3$. Note that $\mathcal{S}_3$ is called the {\it spacelike Scherk surface} in \cite{CR}.  For each shrinking singularity on this surface there are four lightlike lines passing through it (see Figure \ref{fig:Periodic}, right). Further, this surface also can be constructed by the method described in Section \ref{sec:3} (apply the tessellation generated by a square). We also remark that periodic maximal surfaces with shrinking singularities but without lightlike line segments were intensively studied in \cite{FLS,FL2,LLS} (see also their references). 
\end{example}

\section{Construction of periodic maximal surfaces with lightlike lines}\label{sec:3}
The reflection principle in Theorem \ref{thm:Main} gives a new technique to extend and construct maximal surfaces with lightlike line segments. 
As an application of Theorem \ref{thm:Main}, we construct triply periodic maximal surfaces with lightlike lines.

\subsection{Procedures of construction}\label{subsec:3.1}
The procedures are as follows:

\noindent {\bf Step 1}. Give a tessellation of $\mathbb{R}^2$ which is made by a polygon $\Omega$ satisfying the following two conditions (see Figure \ref{fig:SketchofSteps}, top left):
\begin{itemize}
\item[(1-a)] there exists a maximal graph $\Sigma_1$ which tamely degenerates to future and past-directed lightlike line segments on the edges of $\Omega$ alternately. 
\item[(1-b)] the tessellation is generated by point-symmetries of $\Omega$ with respect to its midpoints of edges. 
\end{itemize}

\noindent{\bf Step 2}. Construct a doubly periodic entire maximal graph $\Sigma_2$ from $\Sigma_1$ by iterating the reflection principle for lightlike line segments. The period lattice $\Lambda$ corresponds to that of the tessellation in Step (1-b) (see Figure \ref{fig:SketchofSteps}, top right). 

\noindent{\bf Step 3}. Construct a triply periodic proper maximal surface $\Sigma_3$ from $\Sigma_2$ by iterating the reflection principle for shrinking singularities. The period lattice corresponds to basis of $\Lambda$ and a lightlike vector along a lightlike line segment (see Figure \ref{fig:SketchofSteps}, bottom). 

\begin{figure}[h!]
 \begin{center}
  \includegraphics[scale=0.3]{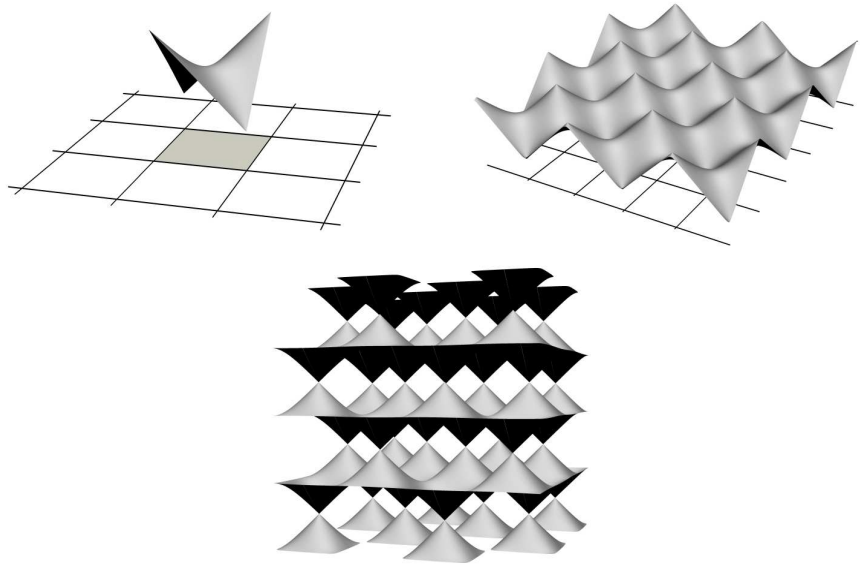} 
\caption{The surfaces $\Sigma_1$ in Step 1 (top left), $\Sigma_2$ in Step 2 (top right) and $\Sigma_3$ in Step 3 (bottom).}\label{fig:SketchofSteps} 
 \end{center}
\end{figure}

\subsection{Details and examples}\label{subsec:3.2}\ \newline
\noindent
{\bf Step (1-a)}:
One way to construct maximal graphs with lightlike line boundaries in Step (1-a) is to use Jenkins and Serrin's criteria \cite{JS} for infinite boundary value problems of the minimal surface equation in the Euclidean 3-space: 

Let $\Omega\subset \mathbb{R}^2$ be a polygonal domain whose boundary consists of a finite number of open line segments $A_1,\ldots,A_k,B_1,\ldots,B_l$. For each of families $\{A_j\}$ and $\{B_j\}$, assume that no two of the elements meet to form a convex corner. Further, for a polygonal domain $P\subset \Omega$ whose vertices are those of $\Omega$, let $\alpha_P$ and $\beta_P$ denote respectively, the total length of $A_j$ such that $A_j\subset\partial P$ and the total length of $B_j$ such that $B_j\subset\partial P$, and let $\gamma_P$ be the perimeter of $P$. 

Then, by the duality of boundary value problems for minimal surfaces in $\mathbb{E}^3$ and maximal surfaces in $\mathbb{L}^3$ proved in \cite[Theorem 1]{AF}, the classical Jenkins-Serrin's theorem \cite[Theorem 3]{JS} yields the following result.

\begin{fact}\label{fact:JS}
There exists a maximal graph $\Sigma_1$ over $\Omega$ which tamely degenerates to a future-directed lightlike line segment on each $A_j$ and a past-directed lightlike line segment on each $B_j$ if and only if 
\begin{equation}\label{eq:JS_condition1}
		2\alpha_P<\gamma_P \ \ \ \text{and}\ \ \  2\beta_P <\gamma_P
\end{equation}
hold for each polygonal domain $P\subsetneq \Omega$ taken as above and 
\begin{equation}\label{eq:JS_condition2}
		\alpha_{\Omega}=\beta_{\Omega}
\end{equation}
holds. The solution is unique up to an additive constant if it exists.
\end{fact}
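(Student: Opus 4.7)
The plan is to deduce Fact~\ref{fact:JS} directly from the classical Jenkins--Serrin theorem \cite[Theorem 3]{JS} via the minimal/maximal surface duality established in \cite[Theorem 1]{AF}. The organizing idea is that this duality exchanges the $\pm\infty$ boundary behavior appearing in the Jenkins--Serrin setting for minimal graphs in $\mathbb{E}^3$ with tame degeneration to future/past-directed lightlike line segments for maximal graphs in $\mathbb{L}^3$, so the desired theorem transfers to our setting essentially by relabeling the boundary conditions.

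First I would recall precisely what \cite[Theorem 1]{AF} provides: a bijective correspondence, up to additive constants, between solutions $\psi$ of the maximal surface equation and solutions $\varphi$ of the minimal surface equation on the same simply connected domain, together with an explicit dictionary between their boundary behaviors. From this dictionary I would verify that $\psi$ tamely degenerates to a future-directed (respectively past-directed) lightlike segment on an edge of $\partial\Omega$ if and only if the dual $\varphi$ has boundary value $+\infty$ (respectively $-\infty$) along that edge. Given this identification, the hypotheses that no two edges in $\{A_j\}$ meet at a convex corner, and likewise for $\{B_j\}$, place the dual minimal problem exactly within the classical Jenkins--Serrin framework on $\Omega$.

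Second, I would invoke \cite[Theorem 3]{JS}, which asserts that a minimal graph over $\Omega$ with boundary value $+\infty$ on each $A_j$ and $-\infty$ on each $B_j$ exists, and is unique up to additive constant, if and only if the inequalities (\ref{eq:JS_condition1}) hold for every admissible proper subpolygon $P\subsetneq \Omega$ and the equality (\ref{eq:JS_condition2}) holds for $\Omega$ itself. Transferring existence, uniqueness, and non-existence back through the duality yields the claimed characterization for the maximal graph $\Sigma_1$, the additive-constant ambiguity becoming the usual vertical translation freedom on the maximal side.

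The main obstacle I expect is the boundary dictionary in the first step: one must confirm that the $O(\mathrm{dist}^2)$ quadratic remainder encoded in the definition of tame degeneration corresponds exactly to the asymptotic regime in which a Jenkins--Serrin solution blows up to $\pm\infty$ near an edge. If this equivalence is already recorded in \cite[Theorem 1]{AF} in both directions, the proof collapses to chaining the two quoted theorems; otherwise a short supplementary asymptotic analysis of the Jenkins--Serrin solution near such an edge, combined with the duality formula relating $\nabla \psi$ and $\nabla \varphi$, would be required to close the gap.
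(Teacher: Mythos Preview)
Your proposal is correct and matches the paper's approach exactly: the paper does not give a separate proof of Fact~\ref{fact:JS} but simply states (in the sentence preceding it) that the result follows from combining the classical Jenkins--Serrin theorem \cite[Theorem~3]{JS} with the duality of boundary value problems in \cite[Theorem~1]{AF}. Your identification of the boundary dictionary as the only point needing care is apt; the paper evidently regards this correspondence between tame degeneration and $\pm\infty$ boundary values as part of what \cite[Theorem~1]{AF} delivers.
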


\begin{remark}
Each maximal graph over a polygonal domain $\Omega$ in Fact \ref{fact:JS} can be constructed by using the Poisson integral of a step function on $S^1\simeq \partial{\mathbb{D}}$ valued in the vertices of $\Omega$, which is a good way to construct maximal graphs explicitly. See \cite[Corollary 3.12 and Section 4.4]{AF} for more details.
\end{remark}

\noindent {\bf Step (1-b)}: Let $\mathcal{M}$ be the set of polygonal domains which tessellate $\mathbb{R}^2$ by taking point symmetries with respect to the midpoints of all edges repeatedly. 

The following assertions give a classification of the shapes of domains in $\mathcal{M}$ satisfying the conditions in Fact \ref{fact:JS}. See Figure \ref{fig:tiling}. 

\begin{lemma}\label{lemma:zonogons}
 Each $\Omega \in \mathcal{M}$ is either one of the following:
\begin{quote}
\setlength{\leftskip}{-1.1cm}
{\rm (i)} a triangle, {\rm (ii)} a quadrilateral, or \\
{\rm (iii)} a hexagon whose opposite sides are parallel and of equal length.
\end{quote}
\end{lemma}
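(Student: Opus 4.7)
The plan is to combine an Euler-characteristic count with elementary angle- and length-chasing. I will first extract two structural consequences from the hypothesis that $\Omega$ tessellates $\mathbb{R}^2$ under iteration of midpoint half-turns. Since the point symmetry $H_{m_i}$ about the midpoint $m_i$ of an edge $e_i$ sends $e_i$ to itself, neighboring tiles share full edges, so the tessellation is automatically edge-to-edge. Moreover the group $G$ generated by the midpoint half-turns acts transitively on tiles (by construction), and the identity $H_{m_i}(v_i)=v_{i+1}$ places all vertices $v_1,\dots,v_n$ of $\Omega$ into a single $G$-orbit; hence every vertex of the tessellation has a common valence, which I denote $k$.

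Next, because $H_{m_j}\circ H_{m_i}$ equals the translation by $2(m_j-m_i)$, the group $G$ contains a rank-two translation lattice $T$ of index $2$, and the quotient $\mathbb{R}^2/T$ is a torus containing exactly two tiles. The counts $F=2$, $E=n$, and $V=2n/k$, combined with Euler's formula $V-E+F=0$ on the torus, collapse to
\[
(n-2)(k-2)=4,
\]
whose only integer solutions with $n,k\ge 3$ are $(n,k)\in\{(3,6),(4,4),(6,3)\}$. The cases $n=3$ and $n=4$ yield types (i) and (ii) at once, as every triangle (respectively every quadrilateral) is easily seen to belong to $\mathcal{M}$.

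For the remaining case $n=6$, I would then chase angles and lengths. At the vertex $v_i$ the three tiles $\Omega$, $H_{m_{i-1}}(\Omega)$, $H_{m_i}(\Omega)$ contribute the interior angles $\alpha_i,\alpha_{i-1},\alpha_{i+1}$ (where $\alpha_j$ denotes the interior angle of $\Omega$ at $v_j$), so the closing condition reads
\[
\alpha_{i-1}+\alpha_i+\alpha_{i+1}=2\pi \qquad (i=1,\dots,6).
\]
Subtracting consecutive relations immediately gives $\alpha_i=\alpha_{i+3}$; summing the exterior turning angles $\pi-\alpha_j$ over three consecutive edges then yields $\theta_{i+3}-\theta_i=\pi$, so $e_i$ and $e_{i+3}$ are antiparallel. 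Finally, at $v_i$ the two non-shared edges of the neighbors $H_{m_{i-1}}(\Omega)$ and $H_{m_i}(\Omega)$ must coincide (since the three tiles already fill the full angle $2\pi$), which forces $|e_{i-2}|=|e_{i+1}|$, i.e.\ equality of opposite side lengths. Together with the antiparallelism, this gives exactly type (iii).

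The step I expect to be hardest is the very first one: rigorously extracting the edge-to-edge property and the constant vertex valence from the bare hypothesis defining $\mathcal{M}$. Once these two properties are in place, the Euler count and the hexagonal angle-and-length analysis are essentially mechanical.
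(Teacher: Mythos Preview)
Your proposal is correct and reaches the same Diophantine constraint as the paper, but by a genuinely different route. The paper does not pass to a torus quotient or invoke Euler's formula at all; instead it argues purely with angles. It first shows (its Lemma~A.1) that iterating the midpoint half-turns around a fixed vertex $z$ produces the interior angles $\alpha_1,\alpha_2,\alpha_3,\dots$ of $\Omega$ cyclically, so there is a uniform ``valency'' $j$ with $\alpha_i+\alpha_{i+1}+\cdots+\alpha_{i+j-1}=2\pi$ for every $i$ and $\alpha_{i+j}=\alpha_i$. Summing these $n$ relations and comparing with the interior-angle sum $(n-2)\pi$ gives $(n-2)j=2n$ directly, which is equivalent to your $(n-2)(k-2)=4$. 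For $n=6$ the paper simply reads off the pattern $\alpha_1,\alpha_2,\alpha_3,\alpha_1,\alpha_2,\alpha_3$ from the periodicity and asserts the parallel/equal-length conclusion. Your approach trades this elementary double-count for some topology (discreteness of $G$, index of $T$, simple transitivity on tiles to get $F=2$); the payoff is a conceptually cleaner identification of the equation with an Euler-characteristic obstruction, and your hexagon step is actually more explicit than the paper's in deriving the equal-length condition $|e_{i-2}|=|e_{i+1}|$ from the matching of neighbor edges. The step you flagged as hardest---justifying that the tiling is edge-to-edge with constant valence, and that the torus carries exactly two tiles---is indeed where your argument needs the most care, whereas the paper's angle-cycling argument sidesteps that by working locally at a single vertex.
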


\begin{lemma}\label{lemma:zonogons2}
A domain $\Omega \in \mathcal{M}$ satisfying \eqref{eq:JS_condition1} and \eqref{eq:JS_condition2} is either 
\begin{quote}
\setlength{\leftskip}{-1.1cm}
{\rm (ii$^\prime$)} a quadrilateral whose two pairs of opposite sides are equal in length, or \\
{\rm (iii$^\prime$)} a hexagon satisfying the conditions {\rm(iii)} in Lemma $\ref{lemma:zonogons}$ and 
\begin{equation}\label{eq:JS_condition_6-gon}
d>|b-(a+c)|,
\end{equation}
where $a,b,c$ are the lengths of consecutive three edges and $d$ is the smallest length of diagonal lines connecting opposite vertices of the hexagon. 
\end{quote}
\end{lemma}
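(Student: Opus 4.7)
My plan is to convert \eqref{eq:JS_condition1} and \eqref{eq:JS_condition2} into explicit length inequalities by running through every proper sub-polygon $P\subsetneq\Omega$ whose vertices lie among those of $\Omega$. Writing $\delta_P$ for the total length of those sides of $P$ that are diagonals of $\Omega$, one has $\gamma_P=\alpha_P+\beta_P+\delta_P$, so \eqref{eq:JS_condition1} becomes $\delta_P>|\alpha_P-\beta_P|$ on each such $P$. Case (i) of Lemma~\ref{lemma:zonogons} is excluded at once: each $\Omega\in\mathcal{M}$ is convex, and the families $\{A_j\},\{B_j\}$ cannot share a convex vertex, so their labels must alternate around $\partial\Omega$ and the number of edges is even.

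For case (ii), label the four sides cyclically $A_1,B_1,A_2,B_2$. The only proper sub-polygons whose vertices belong to those of $\Omega$ are the four triangles obtained by omitting one vertex; for each of them, $\delta_P>|\alpha_P-\beta_P|$ is exactly the strict triangle inequality and is automatic. Thus \eqref{eq:JS_condition1} is vacuous, leaving only \eqref{eq:JS_condition2}, which reads $|A_1|+|A_2|=|B_1|+|B_2|$: this is (ii$^\prime$).

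For case (iii), write the six edges cyclically as $e_1,\dots,e_6$ with $e_1,e_3,e_5$ of type $A$; the hypothesis (iii) of Lemma~\ref{lemma:zonogons} gives $a:=|e_1|=|e_4|$, $b:=|e_2|=|e_5|$, $c:=|e_3|=|e_6|$, so $\alpha_\Omega=a+b+c=\beta_\Omega$ holds automatically. I split the proper sub-polygons into three families: (a) triangles give the strict triangle inequality, automatic; (b) pentagons and quadrilaterals omitting two \emph{non-adjacent} vertices contribute only short diagonals $|v_iv_{i+2}|$, and $\delta_P>|\alpha_P-\beta_P|$ reduces to bounds of the form $|\vec e_i+\vec e_{i+1}|>\bigl|\,|e_i|-|e_{i+1}|\,\bigr|$ (or the sum of two such), each automatic since the three generators of a non-degenerate zonogon are pairwise non-parallel; (c) quadrilaterals omitting two \emph{adjacent} vertices, which are the substantive ones. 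Each such sub-polygon has $\delta_P$ equal to one of the three opposite-vertex diagonals $v_iv_{i+3}$, and $\delta_P>|\alpha_P-\beta_P|$ becomes $|v_iv_{i+3}|>|b'-(a'+c')|$, where $(a',b',c')$ are the three consecutive edge lengths on the short path of that diagonal and $b'$ is the middle one. Opposite-edge equality pairs the six adjacent-omit sub-polygons into three essentially distinct inequalities, one per long diagonal, and specializing to the smallest yields \eqref{eq:JS_condition_6-gon}.

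The main obstacle is the family (c) bookkeeping: carefully matching the six adjacent-omit sub-polygons to three diagonal inequalities, and identifying the labeling of consecutive edges that turns the inequality for the smallest diagonal into the precise form \eqref{eq:JS_condition_6-gon}. Since the lemma only asserts necessity of (ii$^\prime$) or (iii$^\prime$) under the JS conditions, it suffices to exhibit \eqref{eq:JS_condition_6-gon} as a consequence; one need not verify that it implies the other two diagonal inequalities. I would make the matching explicit by writing the three long diagonals as $|\varepsilon_1\vec u+\varepsilon_2\vec v+\vec w|$ for signs $\varepsilon_i\in\{\pm 1\}$, where $\vec u,\vec v,\vec w$ are the three zonogon generators, and comparing them against the three quantities $|\pm a\pm b\pm c|$.
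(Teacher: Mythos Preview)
Your approach matches the paper's: both reduce \eqref{eq:JS_condition1}--\eqref{eq:JS_condition2} to the inequalities $\delta_P>|\alpha_P-\beta_P|$ on the admissible sub-polygons $P\subsetneq\Omega$ and single out, in the hexagon case, the quadrilaterals with three consecutive edges on $\partial\Omega$ as the only substantive constraints (the paper's ``quadrilateral subdomains of $\overline{\Omega}$ whose consecutive three edges are in $\partial\Omega$'' are exactly your family (c)). Your case analysis (a)--(c) is a correct elaboration of what the paper compresses into ``by similar arguments,'' and your reading of the lemma as a necessity statement is right. One minor slip: it is not true that every $\Omega\in\mathcal{M}$ is convex (non-convex quadrilaterals also tessellate by midpoint reflections), but the alternation of $A/B$ labels you rely on still follows, since a simple quadrilateral has at most one reflex vertex and the hexagons of type~(iii) are zonogons; the paper instead excludes triangles by observing that the triangle inequality forces $\alpha_\Omega\neq\beta_\Omega$ in \eqref{eq:JS_condition2}.
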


\begin{figure}[htb]
 \begin{center}
  \includegraphics[height=5cm]{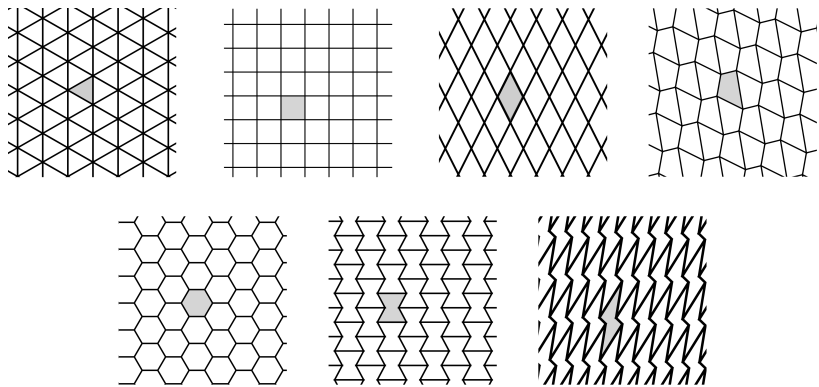} 
\caption{Tessellations of $\mathbb{R}^2$ from polygonal domains in $\mathcal{M}$. }
\label{fig:tiling} 
 \end{center}
\end{figure}

\noindent The proofs are given in Appendix A. Therefore, we can construct the maximal graph $\Sigma_1$ over a given polygonal domain $\Omega$ in Lemma \ref{lemma:zonogons2} (see Figure \ref{Hexagon1}). 

\begin{figure}[htb]
 \begin{center}
  \includegraphics[height=3.cm]{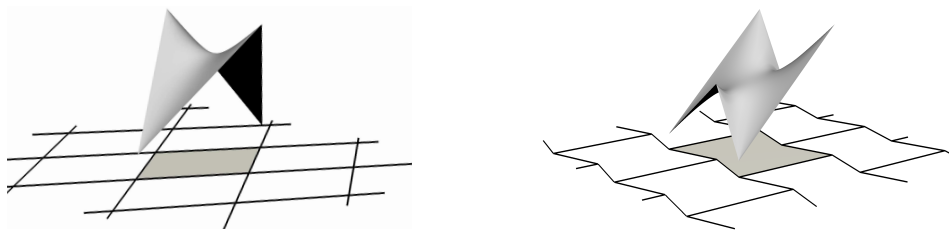} 
\caption{Maximal graphs with lightlike line boundaries over polygonal domains in $\mathcal{M}$ generated by Step 1.} \label{Hexagon1} 
 \end{center}
\end{figure}
\vspace{0.2cm}

\noindent {\bf Step 2}: By Fact \ref{fact:2LLL}, the endpoints of each lightlike line segment are shrinking singularities. Applying Theorem \ref{thm:Main}, i.e. taking point symmetries at the midpoints of the lightlike line segments, we have a doubly periodic maximal graph $\Sigma_2$ with shrinking singularities on the vertices and lightlike line segments over the edges of the polygons (see Figure \ref{fig:Hexagon2}).

\begin{figure}[htb]
 \begin{center}
  \includegraphics[height=3.3cm]{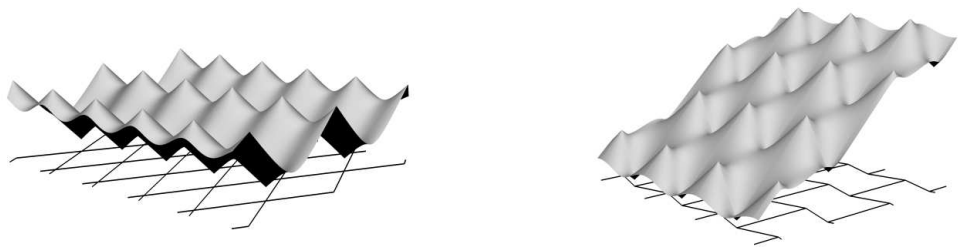} 
\caption{Doubly periodic entire maximal graphs with lightlike line segments and shrinking singularities generated by Step 2.} \label{fig:Hexagon2}
 \end{center}
\end{figure}
\vspace{0.2cm}

\noindent {\bf Step 3}: Applying the reflection principle for shrinking singularities, i.e.~ taking point symmetries there, we have a triply periodic proper maximal surface $\Sigma_3$, which is a multi-valued graph with infinitely many sheets congruent to $\Sigma_2$ 
(see Figure \ref{fig:Hexagon3}).

\begin{figure}[htb]
 \begin{center}
  \includegraphics[height=4.0cm]{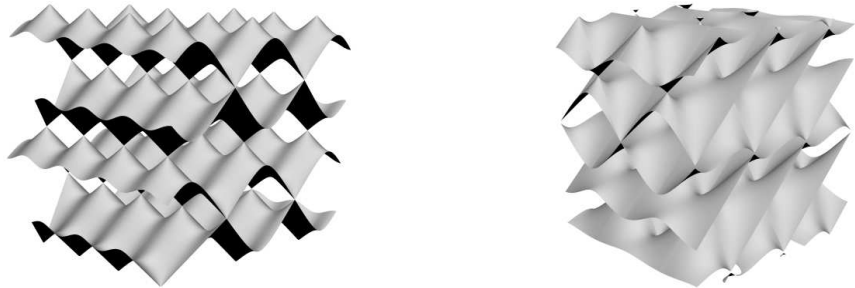} 
\caption{Triply periodic maximal surfaces with lightlike line segments and shrinking singularities generated by Step 3.} \label{fig:Hexagon3}
 \end{center}
\end{figure}

\subsection{Parametric representation of maximal surfaces with lightlike lines}
Well-known classes of maximal surfaces with singularities such as generalized maximal surfaces in \cite{ER} and {\it maxfaces} in \cite{UY1} are defined on Riemann surfaces. 
Unfortunately, isothermal coordinates break down near lightlike line segments for the periodic maximal surfaces $\Sigma_2$ and $\Sigma_3$ in the previous subsection. Therefore, to parametrize them we need to consider a wider class of maximal surfaces.

\begin{definition}[{\cite[Definition 2.1]{UY1}}]
A smooth map $X\colon M \to \mathbb{L}^3$ from a 2-dimensional manifold $M$ to $\mathbb{L}^3$ is called a {\it maximal map} if there is an open dense set $W\subset M$ such that $X|_W$ is a spacelike maximal immersion. 
\end{definition}

Suppose the boundary of a maximal graph contains a lightlike line segment with shrinking singularities at its endpoints. Then, as shown in the proof of Theorem \ref{thm:Main}, the extended surface across the lightlike line segment can be parametrized by a non-conformal but real analytic mapping (see Figure \ref{fig:extension}).  
Moreover, the extension with respect to each of the shrinking singularities is parametrized by a generalized maximal surface, in particular, an analytic mapping. As a consequence, for example, the triply periodic maximal surfaces constructed in Section \ref{subsec:3.1} can be parametrized by a (non-conformal) real analytic proper maximal map by 
gluing the above parametrizations together. In this case, the domain of the maximal map is a 2-dimensional manifold of infinite genus.
See Figure \ref{fig:Maximal_map}. 

\begin{figure}[htb]
 \begin{center}
  \includegraphics[height=9.0cm]{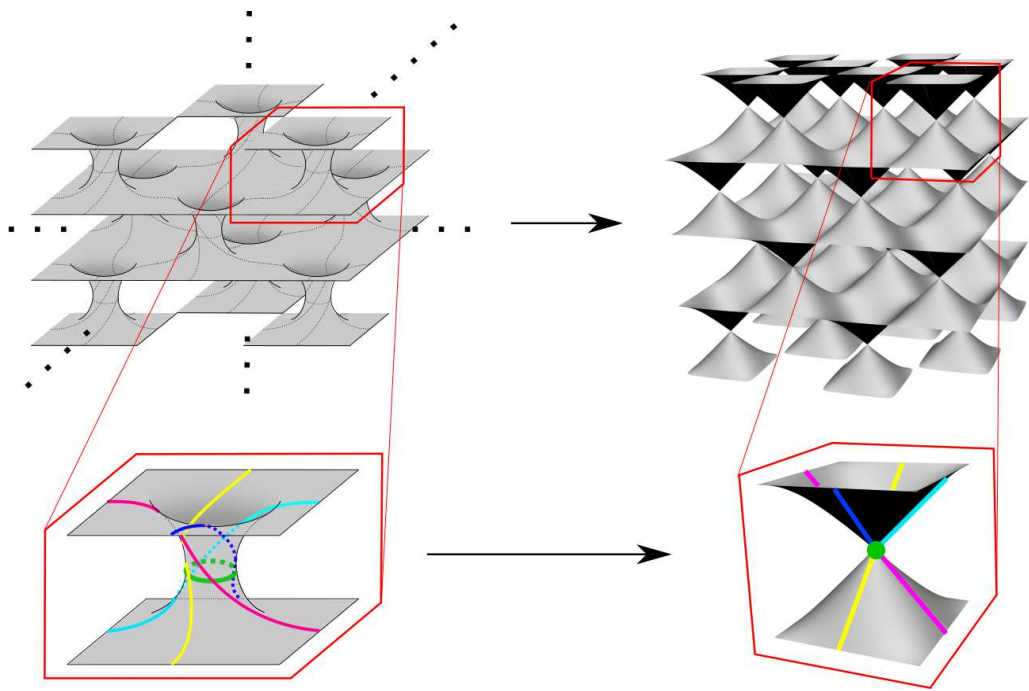} 
\caption{The surface $\Sigma_3$ parametrized by a proper maximal map.}\label{fig:Maximal_map}
 \end{center}
\end{figure}

\section{Concluding remarks and a future problem}\label{sec:4}
Finally, we remark that the reflection principle in Theorem \ref{thm:Main} is valid only for lightlike line segments connecting shrinking singularities. 
Therefore, it is natural to consider the following cases, as well: The boundary of a maximal graph contains

\begin{quote}
{\bf Case 1} an entire lightlike line, or \quad {\bf Case 2} a lightlike half-line.
\end{quote}

On the contrary, Theorem \ref{thm:Main} is not valid for such entire lightlike lines or lightlike half-lines, in particular, we cannot take the ``midpoint'' of any such lines. See Figure \ref{fig:cats}, for example.
Typical examples of Case 2 are the following hyperbolic catenoid $\mathcal{H}$ and the parabolic catenoid $\mathcal{P}$, which are invariant under rotations in $\mathbb{L}^3$ with respect to spacelike and lightlike axes, respectively (see Figure \ref{fig:cats} and also \cite{CR,Okayama} for their implicit representations):
\begin{align*}
\mathcal{H}&=\{ (x,y,t)\in \mathbb{L}^3 \mid \sin^2{x}+y^2-t^2=0\},&\\
\mathcal{P}&=\{ (x,y,t)\in \mathbb{L}^3 \mid 12(x^2- t^2)-(x + t)^4 + 12y^2=0\}.&
\end{align*}
\begin{figure}[htb]
 \begin{center}
        \includegraphics[height=4.0cm]{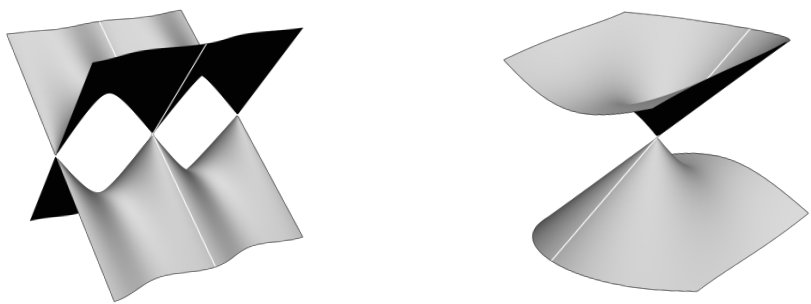}
\caption{The hyperbolic catenoid (left) and the parabolic catenoid (right).} \label{fig:cats}
 \end{center}
\end{figure}
Also, examples of Case 1 were given in \cite{AUY2}, and it is known that such a maximal graph containing an entire lightlike line  cannot be defined on a convex domain, as proved in \cite[Lemma 2.1]{FL}.

 From the viewpoint of the function theory, in the present article we discussed bounded harmonic mappings. On the other hand, Cases 1 and 2 lead us to considering {\it unbounded} harmonic mappings for which the Poisson integrals do not work. 
 
Related to this, the following problem remains as a future work.

\begin{question} Is there a reflection principle for entire lightlike lines or lightlike half-lines (as in the above Cases $1$ and $2$) on the boundaries of maximal surfaces?
\end{question}

\appendix 
\section{Proofs of Lemmas \ref{lemma:zonogons} and \ref{lemma:zonogons2}}
In this appendix, we give a classification of tessellations of $\mathbb{R}^2$ satisfying the conditions of Step 1 in Section \ref{subsec:3.1}. 

For a chosen vertex $z$ of a polygonal domain $\Omega$, we denote the interior angles of $\Omega$ starting from $z$ by $\alpha_1,\alpha_2,\ldots,\alpha_n$ and set $\alpha_{mn+k}=\alpha_k$ for a positive integer $m$ and $k=1,2,\ldots,n$. 

\begin{lemma}\label{lemma:A1}
If $\Omega \in \mathcal{M}$, then there exists a unique $j\geq 2$ such that $\alpha_1+\alpha_2+\cdots+\alpha_j=2\pi$ and $\alpha_l=\alpha_m$ if $l\equiv k$ mod $j$. Moreover, this $j$ is independent of a choice of vertices of $\Omega$.
\end{lemma}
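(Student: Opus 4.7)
The plan is to analyze the cyclic sequence of interior angles seen by walking around a vertex of $\Omega$ through the polygons of the tessellation. Fix a vertex $v_{i+1}$ of $\Omega$ and list the polygons meeting at $v_{i+1}$ in clockwise order as $P_0 = \Omega, P_1, P_2, \ldots$, so that each $P_{m+1}$ is obtained from $P_m$ by a $180^\circ$ rotation about the midpoint of their shared edge. Since such a rotation interchanges the two endpoints of the edge about which it rotates, induction on $m$ shows that the vertex of $P_m$ located at $v_{i+1}$ corresponds, under the rigid motion identifying $P_m$ with a copy of $\Omega$, to the vertex $v_{i+1+m}$ of $\Omega$ (indices taken mod $n$); in particular the interior angle of $P_m$ at $v_{i+1}$ is $\alpha_{i+1+m}$.

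Only finitely many polygons meet at $v_{i+1}$ and their angles must sum to $2\pi$, so the cycle closes up after some smallest $j = j(v_{i+1}) \geq 2$ steps. This forces simultaneously $\alpha_{i+1} + \alpha_{i+2} + \cdots + \alpha_{i+j} = 2\pi$ and $P_j = P_0$; unpacking the latter identity through the induction above yields $\alpha_{l+j} = \alpha_l$ for every $l$, which is the desired periodicity (and setting $i = 0$ gives an integer $j$ with both properties). For uniqueness, I would note that any integer $j'$ satisfying both properties is a period of $(\alpha_l)$ and hence a multiple of the minimal period $j_0$; writing $s := \alpha_1 + \cdots + \alpha_{j_0} > 0$, periodicity forces $(j/j_0)\,s = (j'/j_0)\,s = 2\pi$, whence $j = j'$. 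Vertex-independence follows since changing the starting vertex merely reindexes $(\alpha_l)$ by a cyclic shift, leaving $j_0$ and $s$ untouched.

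The main obstacle will be the inductive bookkeeping in the first step. Concretely, one must verify that the $180^\circ$ rotation about the midpoint of the shared edge consistently shifts the cyclic ``vertex type'' at $v_{i+1}$ by exactly one step; this rests on the fact that the rotation fixes the shared edge setwise while exchanging its two endpoints. A minor subtlety is that the direction of the shift depends on whether one traverses the polygons clockwise or counterclockwise around $v_{i+1}$, so an orientation convention must be fixed once and for all; the statement of the lemma is insensitive to this choice, since it is preserved under the reindexing $l \leftrightarrow -l$.
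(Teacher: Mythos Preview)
Your proof is correct and follows essentially the same strategy as the paper's: both identify the sequence of angles seen around a fixed vertex as $\alpha_1,\alpha_2,\alpha_3,\ldots$ by iterating the midpoint point-symmetries, and both read off the existence of $j$ from the closing-up condition. The only noticeable difference is in the vertex-independence step: the paper compares two adjacent vertices directly via the telescoping identity $\alpha_2+\cdots+\alpha_{j_1+1}=\alpha_1+\cdots+\alpha_{j_1}=2\pi$, whereas you argue more abstractly that $j$ is determined by the minimal period $j_0$ and the block sum $s$, both of which are invariant under cyclic reindexing; your route also makes the uniqueness of $j$ explicit, which the paper leaves implicit.
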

We call the above $j$ for $\Omega\in \mathcal{M}$ the {\it valency} of $\Omega$, and in this case each vertex is said to be {\it j-valent}.

\begin{proof}
Let us consider the edge connecting the vertices with the angles $\alpha_1$ and $\alpha_2$. If we take the reflection with respect to the midpoint of the edge, then $\alpha_2$ appears next to $\alpha_1$ around $z$. Inductively, $\alpha_1, \alpha_2, \alpha_3, \ldots$ appear around $z$ in this order (see Figure \ref{fig:branch_condition}). Thus there is some $j$ such that $\alpha_1+\alpha_2+\cdots+\alpha_j=2\pi$ and $\alpha_l=\alpha_m$ if $l\equiv k$ mod $j$.

Next, we denote the valencies of the vertices with angles $\alpha_1$ and $\alpha_2$ by $j_1$ and $j_2$.  
Then $\alpha_1+\alpha_2+\cdots+\alpha_{j_1}=2\pi$ and $\alpha_{j_1+1}=\alpha_1$ hold. Hence we have 
\begin{equation*}
\alpha_2+\alpha_3+\cdots+\alpha_{j_1+1}=\alpha_1+\alpha_2+\cdots+\alpha_{j_1}=2\pi,
\end{equation*} 
which implies $j_1=j_2$. By induction on the vertices, we obtain the last assertion.
\end{proof}

\begin{figure}[htb]
 \begin{center}
  \includegraphics[height=5.0cm]{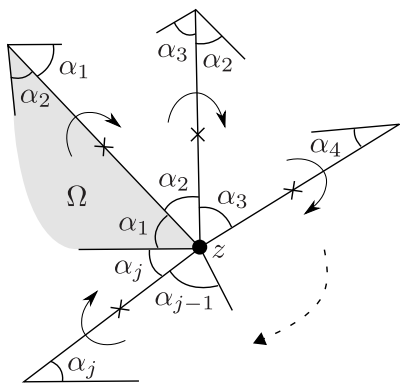} 
\caption{The condition $\alpha_1+\alpha_2+\cdots+\alpha_j=2\pi$ at the vertex $z$.} 
\label{fig:branch_condition}
 \end{center}
\end{figure}

\begin{lemma}\label{lemma:A2}
Let $\Omega \in \mathcal{M}$ be an $n$-gon and each vertex is $j$-valent for $j\geq 2$. Then the following equation holds.
\begin{equation*}
(n-2)j=2n.
\end{equation*}
In particular, the possible values of $n$ and $j$ are 
\begin{equation*}
(n,j)=(3,6),(4,4),(6,3).
\end{equation*}

\end{lemma}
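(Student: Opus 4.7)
The plan is to exploit the two compatible periodicities of the angle sequence $(\alpha_k)$ around a single vertex $z$ of $\Omega$: namely, the $n$-periodicity coming from the cyclic structure of the $n$-gon, and the $j$-periodicity supplied by Lemma \ref{lemma:A1}. The key identity $(n-2)j=2n$ will then pop out by computing one and the same partial sum two different ways.

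First, I would record the two inputs. By the interior angle sum of a simple $n$-gon,
\[
\alpha_1+\alpha_2+\cdots+\alpha_n=(n-2)\pi,
\]
and by Lemma \ref{lemma:A1}, applied at the vertex $z$,
\[
\alpha_1+\alpha_2+\cdots+\alpha_j=2\pi,
\]
with the convention $\alpha_{k+j}=\alpha_k$. The cyclic convention $\alpha_{k+n}=\alpha_k$ is built into the definition of the angle sequence. Consequently, $(\alpha_k)_{k\in\mathbb{Z}}$ has both $n$ and $j$ as periods, so in particular the sum of any $j$ consecutive terms is $2\pi$ and the sum of any $n$ consecutive terms is $(n-2)\pi$.

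Next I would evaluate $S:=\sum_{k=1}^{nj}\alpha_k$ in two ways. Grouping the $nj$ summands into $n$ consecutive blocks of length $j$ gives
\[
S=n\cdot(\alpha_1+\cdots+\alpha_j)=2\pi n.
\]
Regrouping instead into $j$ consecutive blocks of length $n$ gives
\[
S=j\cdot(\alpha_1+\cdots+\alpha_n)=j(n-2)\pi.
\]
Equating these two expressions yields $j(n-2)=2n$, which is the stated relation.

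Finally, I would read off the integer solutions. Rewriting $j=2n/(n-2)=2+4/(n-2)$ forces $n-2$ to be a positive divisor of $4$, so $n-2\in\{1,2,4\}$, producing exactly the triples $(n,j)=(3,6),(4,4),(6,3)$. No step here is truly an obstacle; the only subtlety is the clean double-counting trick with the two periodicities, and one should briefly note that the $n$- and $j$-periodicity of $(\alpha_k)$ makes the regrouping $\sum_{k=1}^{nj}\alpha_k$ unambiguous.
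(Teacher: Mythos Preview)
Your proof is correct and follows essentially the same double-counting idea as the paper: the paper computes $j(\alpha_1+\cdots+\alpha_n)=\sum_{i=1}^n(\alpha_i+\cdots+\alpha_{i+j-1})=2n\pi$, which is exactly your computation of $\sum_{k=1}^{nj}\alpha_k$ grouped the two ways. You additionally spell out the divisibility argument $j=2+4/(n-2)$ for the integer solutions, which the paper leaves implicit.
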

\begin{proof}
By Lemma \ref{lemma:A1}, the relation $\alpha_i+\alpha_{i+1}+\cdots+\alpha_{i+j-1}=2\pi$ and $\alpha_{i+j}=\alpha_{i}$ hold for arbitrary $i$. Since the sum of the interior angles of an $n$-gon is $(n-2)\pi$, we obtain
\begin{align*}
j(n-2)\pi&=j(\alpha_1+\alpha_{2}+\cdots+\alpha_{n})\\
&=\sum_{i=1}^n(\alpha_i+\alpha_{i+1}+\cdots+\alpha_{i+j-1})\\
&=2n\pi,
\end{align*}
which is the desired equality.
\end{proof}

It can be easily seen that any triangles and quadrilaterals are in $\mathcal{M}$, and they are $6$-valent and $4$-valent, respectively.
When $\Omega \in \mathcal{M}$ is a hexagon, which is $3$-valent, Lemma \ref{lemma:A1} yields that the interior angles of $\Omega$ are written as $\alpha_1,\alpha_2,\alpha_3,\alpha_1,\alpha_2,\alpha_3$ in this order. This implies that the opposite sides of edges of $\Omega$ are parallel and of equal length, and hence we have a proof of Lemma \ref{lemma:zonogons}. 

Finally, the proof of Lemma \ref{lemma:zonogons2} is completed as follows.
\begin{proof}[Proof of Lemma $\ref{lemma:zonogons2}$]
It is enough to consider the three cases in Lemma \ref{lemma:A2}. Any triangle does not satisfy \eqref{eq:JS_condition2} by the triangle inequality. Secondly, any quadrilateral satisfies \eqref{eq:JS_condition1} by the reverse triangle inequality, and \eqref{eq:JS_condition2} for the quadrilateral means that two pairs of opposite sides are equal in length. Finally, by similar arguments above we can check that the considered hexagon satisfies \eqref{eq:JS_condition1} and \eqref{eq:JS_condition2} if and only if it satisfies \eqref{eq:JS_condition_6-gon}, which is the condition \eqref{eq:JS_condition1} for quadrilateral subdomains of $\overline{\Omega}$ whose consecutive three edges are in $\partial{\Omega}$. 
\end{proof}

\begin{acknowledgement}
The authors express their gratitude to Professor Wayne Rossman for helpful comments.
\end{acknowledgement}



\begin{bibdiv}
\begin{biblist}

\bib{Ahlfors}{book}{
      author={Ahlfors, L.V.},
       title={Complex analysis},
     edition={Third},
   publisher={McGraw-Hill Book Co., New York},
        date={1978},
        ISBN={0-07-000657-1},
        note={An introduction to the theory of analytic functions of one
  complex variable, International Series in Pure and Applied Mathematics},
      review={\MR{510197}},
}

\bib{A}{article}{
      author={Akamine, S.},
       title={Causal characters of zero mean curvature surfaces of {R}iemann
  type in the {L}orentz-{M}inkowski 3-space},
        date={2017},
        ISSN={1340-6116},
     journal={Kyushu J. Math.},
      volume={71},
      number={2},
       pages={211\ndash 249},
         url={https://doi.org/10.2206/kyushujm.71.211},
      review={\MR{3727218}},
}

\bib{AF}{unpublished}{
      author={Akamine, S.},
      author={Fujino, H.},
       title={Duality of boundary value problems for minimal and maximal
  surfaces},
        note={arXiv: 1909.00975},
}

\bib{AUY2}{article}{
      author={Akamine, S.},
      author={Umehara, M.},
      author={Yamada, K.},
       title={Space-like maximal surfaces containing entire null lines in
  {L}orentz-{M}inkowski 3-space},
        date={2019},
        ISSN={0386-2194},
     journal={Proc. Japan Acad. Ser. A Math. Sci.},
      volume={95},
      number={9},
       pages={97\ndash 102},
         url={https://doi.org/10.3792/pjaa.95.97},
      review={\MR{4026357}},
}

\bib{ACM}{article}{
      author={Al\'{\i}as, L.J.},
      author={Chaves, R.M.B.},
      author={Mira, P.},
       title={Bj\"{o}rling problem for maximal surfaces in
  {L}orentz-{M}inkowski space},
        date={2003},
        ISSN={0305-0041},
     journal={Math. Proc. Cambridge Philos. Soc.},
      volume={134},
      number={2},
       pages={289\ndash 316},
         url={https://doi.org/10.1017/S0305004102006503},
      review={\MR{1972140}},
}

\bib{DHS}{book}{
      author={Dierkes, U.},
      author={Hildebrandt, S.},
      author={Sauvigny, F.},
       title={Minimal surfaces},
     edition={second},
      series={Grundlehren der Mathematischen Wissenschaften [Fundamental
  Principles of Mathematical Sciences]},
   publisher={Springer, Heidelberg},
        date={2010},
      volume={339},
        ISBN={978-3-642-11697-1},
         url={https://doi.org/10.1007/978-3-642-11698-8},
        note={With assistance and contributions by A. K\"{u}ster and R. Jakob},
      review={\MR{2566897}},
}

\bib{D}{book}{
      author={Duren, P.},
       title={Harmonic mappings in the plane},
      series={Cambridge Tracts in Mathematics},
   publisher={Cambridge University Press, Cambridge},
        date={2004},
      volume={156},
        ISBN={0-521-64121-7},
         url={https://doi.org/10.1017/CBO9780511546600},
      review={\MR{2048384}},
}

\bib{ER}{article}{
      author={Estudillo, F. J.~M.},
      author={Romero, A.},
       title={Generalized maximal surfaces in {L}orentz-{M}inkowski space
  {$L^3$}},
        date={1992},
        ISSN={0305-0041},
     journal={Math. Proc. Cambridge Philos. Soc.},
      volume={111},
      number={3},
       pages={515\ndash 524},
         url={https://doi.org/10.1017/S0305004100075587},
      review={\MR{1151327}},
}

\bib{FL}{article}{
      author={Fernandez, I.},
      author={Lopez, F.~J.},
       title={On the uniqueness of the helicoid and {E}nneper's surface in the
  {L}orentz-{M}inkowski space {$\Bbb R^3_1$}},
        date={2011},
        ISSN={0002-9947},
     journal={Trans. Amer. Math. Soc.},
      volume={363},
      number={9},
       pages={4603\ndash 4650},
         url={https://doi.org/10.1090/S0002-9947-2011-05133-0},
      review={\MR{2806686}},
}

\bib{FL2}{article}{
      author={Fern\'{a}ndez, I.},
      author={L\'{o}pez, F.J.},
       title={Periodic maximal surfaces in the {L}orentz-{M}inkowski space
  {$\Bbb L^3$}},
        date={2007},
        ISSN={0025-5874},
     journal={Math. Z.},
      volume={256},
      number={3},
       pages={573\ndash 601},
         url={https://doi.org/10.1007/s00209-006-0087-y},
      review={\MR{2299572}},
}

\bib{FLS2}{article}{
      author={Fern\'{a}ndez, I.},
      author={L\'{o}pez, F.J.},
      author={Souam, R.},
       title={The space of complete embedded maximal surfaces with isolated
  singularities in the 3-dimensional {L}orentz-{M}inkowski space},
        date={2005},
        ISSN={0025-5831},
     journal={Math. Ann.},
      volume={332},
      number={3},
       pages={605\ndash 643},
         url={https://doi.org/10.1007/s00208-005-0642-6},
      review={\MR{2181764}},
}

\bib{FLS}{article}{
      author={Fern\'{a}ndez, I.},
      author={L\'{o}pez, F.J.},
      author={Souam, R.},
       title={The moduli space of embedded singly periodic maximal surfaces
  with isolated singularities in the {L}orentz-{M}inkowski space {$\Bbb L^3$}},
        date={2007},
        ISSN={0025-2611},
     journal={Manuscripta Math.},
      volume={122},
      number={4},
       pages={439\ndash 463},
         url={https://doi.org/10.1007/s00229-007-0079-1},
      review={\MR{2300054}},
}

\bib{CR}{article}{
      author={Fujimori, S.},
      author={Kim, Y.~W.},
      author={Koh, S.-E.},
      author={Rossman, W.},
      author={Shin, H.},
      author={Takahashi, H.},
      author={Umehara, M.},
      author={Yamada, K.},
      author={Yang, S.-D.},
       title={Zero mean curvature surfaces in {$\bold{L}^3$} containing a
  light-like line},
        date={2012},
        ISSN={1631-073X},
     journal={C. R. Math. Acad. Sci. Paris},
      volume={350},
      number={21-22},
       pages={975\ndash 978},
         url={https://doi.org/10.1016/j.crma.2012.10.024},
      review={\MR{2996778}},
}

\bib{Okayama}{article}{
      author={Fujimori, S.},
      author={Kim, Y.~W.},
      author={Koh, S.-E.},
      author={Rossman, W.},
      author={Shin, H.},
      author={Umehara, M.},
      author={Yamada, K.},
      author={Yang, S.-D.},
       title={Zero mean curvature surfaces in {L}orentz-{M}inkowski 3-space and
  2-dimensional fluid mechanics},
        date={2015},
        ISSN={0030-1566},
     journal={Math. J. Okayama Univ.},
      volume={57},
       pages={173\ndash 200},
      review={\MR{3289302}},
}

\bib{FRUYY}{article}{
      author={Fujimori, S.},
      author={Rossman, W.},
      author={Umehara, M.},
      author={Yamada, K.},
      author={Yang, S.-D.},
       title={New maximal surfaces in {M}inkowski 3-space with arbitrary genus
  and their cousins in de {S}itter 3-space},
        date={2009},
        ISSN={1422-6383},
     journal={Results Math.},
      volume={56},
      number={1-4},
       pages={41\ndash 82},
         url={https://doi.org/10.1007/s00025-009-0443-4},
      review={\MR{2575851}},
}

\bib{HS}{article}{
      author={Hengartner, W.},
      author={Schober, G.},
       title={Harmonic mappings with given dilatation},
        date={1986},
        ISSN={0024-6107},
     journal={J. London Math. Soc. (2)},
      volume={33},
      number={3},
       pages={473\ndash 483},
         url={https://doi.org/10.1112/jlms/s2-33.3.473},
      review={\MR{850963}},
}

\bib{IK}{article}{
      author={Imaizumi, T.},
      author={Kato, S.},
       title={Flux of simple ends of maximal surfaces in {${\bf R}^{2,1}$}},
        date={2008},
        ISSN={0385-4035},
     journal={Hokkaido Math. J.},
      volume={37},
      number={3},
       pages={561\ndash 610},
         url={https://doi.org/10.14492/hokmj/1253539536},
      review={\MR{2441938}},
}

\bib{JS}{article}{
      author={Jenkins, H.},
      author={Serrin, J.},
       title={Variational problems of minimal surface type. {II}. {B}oundary
  value problems for the minimal surface equation},
        date={1966},
        ISSN={0003-9527},
     journal={Arch. Rational Mech. Anal.},
      volume={21},
       pages={321\ndash 342},
         url={https://doi.org/10.1007/BF00282252},
      review={\MR{0190811}},
}

\bib{Katz}{book}{
      author={Katznelson, Y.},
       title={An introduction to harmonic analysis},
     edition={Third},
      series={Cambridge Mathematical Library},
   publisher={Cambridge University Press, Cambridge},
        date={2004},
        ISBN={0-521-83829-0; 0-521-54359-2},
         url={https://doi.org/10.1017/CBO9781139165372},
      review={\MR{2039503}},
}

\bib{KKSY}{article}{
      author={Kim, Y.~W.},
      author={Koh, S.-E.},
      author={Shin, H.},
      author={Yang, S.-D.},
       title={Spacelike maximal surfaces, timelike minimal surfaces, and
  {B}j\"{o}rling representation formulae},
        date={2011},
        ISSN={0304-9914},
     journal={J. Korean Math. Soc.},
      volume={48},
      number={5},
       pages={1083\ndash 1100},
         url={https://doi.org/10.4134/JKMS.2011.48.5.1083},
      review={\MR{2850077}},
}

\bib{KY}{article}{
      author={Kim, Y.~W.},
      author={Yang, S.-D.},
       title={Prescribing singularities of maximal surfaces via a singular
  {B}j\"{o}rling representation formula},
        date={2007},
        ISSN={0393-0440},
     journal={J. Geom. Phys.},
      volume={57},
      number={11},
       pages={2167\ndash 2177},
         url={https://doi.org/10.1016/j.geomphys.2007.04.006},
      review={\MR{2360235}},
}

\bib{K2}{article}{
      author={Kobayashi, O.},
       title={Maximal surfaces with conelike singularities},
        date={1984},
        ISSN={0025-5645},
     journal={J. Math. Soc. Japan},
      volume={36},
      number={4},
       pages={609\ndash 617},
         url={https://doi.org/10.2969/jmsj/03640609},
      review={\MR{759417}},
}

\bib{LLS}{article}{
      author={L\'{o}pez, F.~J.},
      author={L\'{o}pez, R.},
      author={Souam, R.},
       title={Maximal surfaces of {R}iemann type in {L}orentz-{M}inkowski space
  {$\Bbb L^3$}},
        date={2000},
        ISSN={0026-2285},
     journal={Michigan Math. J.},
      volume={47},
      number={3},
       pages={469\ndash 497},
         url={https://doi.org/10.1307/mmj/1030132590},
      review={\MR{1813540}},
}

\bib{N}{book}{
      author={Nitsche, J.C.C.},
       title={Lectures on minimal surfaces. {V}ol. 1},
   publisher={Cambridge University Press, Cambridge},
        date={1989},
        ISBN={0-521-24427-7},
        note={Introduction, fundamentals, geometry and basic boundary value
  problems, Translated from the German by Jerry M. Feinberg, With a German
  foreword},
      review={\MR{1015936}},
}

\bib{O}{book}{
      author={Osserman, R.},
       title={A survey of minimal surfaces},
     edition={Second},
   publisher={Dover Publications, Inc., New York},
        date={1986},
        ISBN={0-486-64998-9},
      review={\MR{852409}},
}

\bib{UY1}{article}{
      author={Umehara, M.},
      author={Yamada, K.},
       title={Maximal surfaces with singularities in {M}inkowski space},
        date={2006},
        ISSN={0385-4035},
     journal={Hokkaido Math. J.},
      volume={35},
      number={1},
       pages={13\ndash 40},
         url={https://doi.org/10.14492/hokmj/1285766302},
      review={\MR{2225080}},
}

\end{biblist}
\end{bibdiv}

\end{document}